\numberwithin{equation}{section}
\def\bildist#1#2{#1(#2)}
\def\paren#1{\left(#1\right)}
\def\epsilona#1#2{\epsilon_{#1,#2}}
\def\XX{\ensuremath{X}}
\def\id{\mathrm{Id}}
\def\ZZ{\ensuremath{\mathbb Z}}
\def\QQ{\ensuremath{\mathbb Q}}
\def\CC{\ensuremath{\mathbb C}}
\def\PP{\ensuremath{\mathbb P}}
\def\Qp{{\ensuremath{\QQ_p}}}
\def\Zp{\ensuremath{\ZZ_p}}
\def\bD{\ensuremath{\mathbf D}}
\def\cau{\mathcal{C}}
\def\caum#1{\mathcal{C}\paren{#1}}
\def\cC{\ensuremath{\mathcal C}}
\def\cD{\ensuremath{\mathcal D}}
\def\cL{\ensuremath{\mathcal L}}
\def\cH{\ensuremath{\mathcal H}}
\def\cV{\ensuremath{\mathcal V}}
\DeclareMathOperator{\Fil}{Fil}
\DeclareMathOperator{\Gr}{Gr}
\DeclareMathOperator{\Hom}{Hom}
\DeclareMathOperator{\Fonct}{Fonct}
\DeclareMathOperator{\ord}{ord}
\DeclareMathOperator{\Gl}{GL}
\def\limproj#1{\displaystyle{\lim_{\underset{#1}{
\leftarrow}}}\
}
\def\Per{\mathcal{P}er}
\def\Perp{\Per^{(p)}}
\def\Perpi{\Per_\infty^{(p)}}
\def\smallmat#1{\left(\begin{smallmatrix}#1\end{smallmatrix}\right)}
\def\norm#1{\lVert #1 \rVert}
\def\cU{\ensuremath{\mathcal U}}
\def\betab#1#2{\beta_{#1}^{(#2)}}
\def\charun{{\mathbf 1}}
\def\actk{\mid_k}
\def\act{\mid}
\theoremstyle{plain}
\newtheorem{thm}{Théorème}[section]
\newtheorem{lem}[thm]{Lemme}
\newtheorem{prop}[thm]{Proposition}
\newtheorem{cor}[thm]{Corollaire}
\theoremstyle{definition}
\newtheorem{rem}[thm]{Remarque}
\title{Symboles modulaires surconvergents et fonctions $L$ $p$-adiques}
\author[*]{Karim Belabas}
\author[**]{Bernadette Perrin-Riou}
\affil[*]{
Univ. Bordeaux, CNRS, Bordeaux INP, IMB, UMR 5251, F-33400, Talence,
France;
INRIA, IMB, UMR 5251, F-33400, Talence, France}
\affil[**]{Université Paris-Saclay, CNRS, Laboratoire de mathématiques d'Orsay, 91405, Orsay, France.}
\begin{document}
\maketitle

%%\subjclass[2010]{11F03, 11F11, 11F67, 11F30}
%%\keywords{L p-adic function, modular symbol}
%%math.NT

\selectlanguage{english}
\begin{abstract}
We come back to the construction of $p$-adic $L$-functions attached to cusp
forms of even weight $k$ in the spirit of G. Stevens, R. Pollack \cite{PS}
and M. Greenberg \cite{mgreenberg} with a new unified presentation including
the non-ordinary case. This construction is based on Stevens's modular
symbols rather than $q$-developments. We review the proofs in order to obtain
an effective algorithm guaranteeing a given $p$-adic accuracy.
\end{abstract}
\selectlanguage{french}
\begin{abstract}
Nous reprenons la construction des fonctions $L$ $p$-adiques associées aux
formes paraboliques de poids $k$ pair dans l'esprit de G. Stevens, R.
Pollack \cite{PS} et M. Greenberg \cite{mgreenberg} avec une présentation
différente et unifiée dans le cas non ordinaire. Cette construction est
basée sur les symboles modulaires de Stevens plutôt que sur les
$q$-développements. Nous reprenons les démonstrations pour maîtriser les
approximations $p$-adiques et obtenir un algorithme effectif.
\end{abstract}

Soit $p$ un nombre premier.
Il y a de nombreuses constructions des fonctions $L$ $p$-adiques
associées à une forme parabolique $F$ pour un sous-groupe de congruence
$\Gamma$ de niveau $N$ et de poids $k$ (\cite{av},
  \cite{panchishkin02}, \cite{MTT}, \dots).
L'une d'elles, due à Pollack-Stevens~\cite{PS} et
Greenberg~\cite{mgreenberg}, part
du symbole modulaire de Stevens associé qui est un élément de
$\Hom_\Gamma(\Delta_0, V_k)$ où $\Delta_0$ est le sous-module de $\ZZ[\PP^1(\QQ)]$
formé des diviseurs de degré 0 et $V_k$ l'espace des polynômes homogènes
de degré $k-2$ pour des actions de $\Gamma$ convenables. En supposant que
$F$ est une forme propre pour l'opérateur de Hecke $T_p(N)$ et pour
un certain $\Qp$-espace vectoriel $D$ de dimension 1 ou 2 selon la valeur propre de $F$,
on associe à $F$ un \textsl{symbole modulaire} à valeurs dans $D \otimes_\QQ V_k$
qui est vecteur propre pour l'opérateur de Hecke $U_p$ de niveau $Np$.
Par itération de $U_p$, on construit alors un symbole modulaire
$\Phi_\infty$ à valeurs dans l'espace des distributions sur $\Zp$ et fixe par $U_p$.
La fonction $L$ $p$-adique est alors associée comme usuellement
à la distribution $\Phi_\infty((\infty,0))$ ou plutôt à sa restriction
à $\Zp^\times$.
On associe donc à $F$ sa transformée de Cauchy $\cC_F$ dans $\Qp[[w]]$ donnée par
$$\cC_F(w)=\int_{\Zp} \frac{1}{1-zw}
   d \left(\Phi_\infty((\infty,0))\right)(z)\;,$$
qui est donc formée à l'aide des moments de $\Phi_\infty((\infty,0))$.
L'intérêt de cette présentation est d'obtenir facilement les dérivées
successives de la fonction $L$ $p$-adique aux valeurs critiques.
Nous avons implanté cette construction dans le système
Pari/GP~\cite{pari} (fonctions \texttt{mspadicmoments} et
\texttt{mspadicL}; la fonction \texttt{ellpadicL} optimise le cas particulier
$k = 2$, permettant en particulier le calcul des invariants d'Iwasawa
(\texttt{ellpadiclambdamu}) et la vérification numérique de la conjecture
de Birch et Swinnerton-Dyer $p$-adique (\texttt{ellpadicbsd}).

Donnons le plan de l'article.
Le premier paragraphe est un rappel sur les distributions.
Dans le deuxième paragraphe, nous définissons les filtrations qui permettent
de maitriser la convergence des séries. Dans le troisième paragraphe, nous
démontrons l'existence sous certaines conditions d'un symbole modulaire à
valeurs dans les distributions sur $\Zp$ à partir d'un symbole modulaire
à valeurs dans les polynômes de degré inférieur ou égal à $k-1$
(proposition \ref{prop:phioo}). Dans le quatrième paragraphe, nous donnons les trois situations
venant des formes modulaires (cas ordinaire, cas semi-stable, cas
supersingulier) où notre construction s'applique,
puis nous relions les valeurs des fonctions $L$ $p$-adiques ainsi définies
aux valeurs complexes traditionnelles.

%\tableofcontents

\section{Distributions}
Soit $\Gamma$ un sous-groupe de congruence de niveau $N$ et
$\Gamma_0=\Gamma \cap \Gamma_0(p)$.
Soit $$\Sigma_0(p)= \{\gamma \in M_2(\ZZ) \text{ tel que }
\det(\gamma) \neq 0, p \mid c, p \nmid a\}.$$
Soit $A = \Qp[[z]]$ l'anneau des séries entières $\sum_{n=0}^\infty a_n z^n$
à coefficients dans $\Qp$ telles que $a_n\to 0$.
Pour $k \in \Zp$, on munit $A$ d'une action à gauche de
$\Sigma_0(p)$:
\def\actdet#1#2{}
$$ \gamma \cdot_{k} f (z) = {\actdet{\gamma}{-k+1}}(a+cz)^{k-2}
f \paren{\frac{b+d z}{a+c z}} $$
et on le note alors $A_k$.
Si $k$ est un entier $\geq 2$,
on note $\QQ[z]_{k-2}$ le sous-espace vectoriel de $A_k$ des polynômes de
degré inférieur ou égal à $k-2$.
Il est stable par l'action de
$\Sigma_0(p)$.
Soit $\cD_k$ le dual continu de $A_k$;
pour $\mu\in \cD_k$ et $f\in A_k$, on note indifféremment $\mu(f)
=\int f\,d\mu$.
On munit $\cD_k$ de l'action duale de
$\Sigma_0(p)$:
$$ \int f \,d\mu\actk \gamma = \int \gamma \cdot_{k}f \,d\mu.
$$
Pour $k$ entier supérieur ou égal à $2$, définissons
$\rho_k: \cD_k \to \Qp[\XX]_{k-2}$
par
\begin{equation*}
\rho_k(\mu) =\int (1-\XX z )^{k-2} d \mu(z).
\end{equation*}
On a
\begin{equation*}
\begin{split}
\rho_k(\mu\actk \gamma)(\XX)&= \int \big(a+cz-\XX (b+dz)\big)^{k-2} d \mu(z)
\\&=
(a-b\XX)^{k-2}\int \paren{1- \frac{-c + d\XX}{a-b\XX}z}^{k-2} d \mu(z)
\\
&=
(a-b\XX)^{k-2}\rho_k(\mu)\paren{\frac{-c + d\XX}{a-b\XX}}.
\end{split}
\end{equation*}
D'où
$$
\rho_k(\mu\actk \gamma)= {^t}\overline{\gamma} \cdot_k \rho_k(\mu)
= \rho_k(\mu)\actk {^t\gamma^*}\,.
$$
\begin{prop}
On a le diagramme commutatif de $\Sigma_0(p)$-modules
\begin{equation*}
\begin{matrix}
\cD_k & \times & A_k & \to& \Qp\\
\rho_k\downarrow&&\uparrow &&||\\
\Qp[\XX]_{k-2} & \times & \Qp[z]_{k-2} & \overset{\left\langle\cdot,\cdot\right\rangle}{\to}& \Qp\\
\end{matrix}
\end{equation*}
où la forme bilinéaire $\Qp[\XX]_{k-2} \times \Qp[z]_{k-2} \to \Qp$
est donnée par
$$ \paren{\sum_i \mu_i \XX^i, \sum_i f_i z^i} \mapsto
\sum_{j=0}^{k-2} (-1)^j\frac{\mu_jf_j}{\binom{k-2}{j}}\,.$$
\end{prop}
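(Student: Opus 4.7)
The statement unpacks into two parts: the pointwise identity
$$\mu(f) = \langle \rho_k(\mu), f \rangle \quad\text{for } \mu \in \cD_k,\ f \in \Qp[z]_{k-2},$$
which is the commutativity of the diagram, and the $\Sigma_0(p)$-equivariance of the two vertical arrows and of the lower bilinear form.

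For the pointwise identity, I would introduce the moments $m_j(\mu) := \int z^j\, d\mu$ and expand $(1-\XX z)^{k-2} = \sum_{j=0}^{k-2} \binom{k-2}{j}(-z)^j \XX^j$. Integrating under $\mu$ yields
$$\rho_k(\mu) = \sum_{j=0}^{k-2} (-1)^j \binom{k-2}{j} m_j(\mu)\, \XX^j,$$
so the $\XX^j$-coefficient $\mu_j$ of $\rho_k(\mu)$ equals $(-1)^j\binom{k-2}{j}m_j(\mu)$. Substituting into the explicit lower pairing, the sign and the binomial collapse and one is left with $\sum_{j=0}^{k-2} f_j m_j(\mu)$, which equals $\mu(f)$ by linearity of the integral on polynomials of degree $\leq k-2$. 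The whole point of the factor $(-1)^j/\binom{k-2}{j}$ in the lower pairing is precisely to engineer this cancellation.

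For equivariance, the calculation immediately preceding the statement already gives $\rho_k(\mu \actk \gamma) = \rho_k(\mu) \actk {}^t\gamma^*$. This simultaneously forces the $\Sigma_0(p)$-action on $\Qp[\XX]_{k-2}$ to be $\gamma \cdot P := P \actk {}^t\gamma^*$ and makes $\rho_k$ equivariant by construction. The inclusion $\Qp[z]_{k-2} \hookrightarrow A_k$ is equivariant because $\Qp[z]_{k-2}$ is $\Sigma_0(p)$-stable in $A_k$. The compatibility of the lower pairing with these actions, i.e.\ $\langle P \actk {}^t\gamma^*, f\rangle = \langle P, \gamma \cdot_k f\rangle$, then follows formally: since $\rho_k$ is surjective (any prescription of the first $k-1$ moments is realized by some distribution, e.g.\ a linear combination of Dirac masses), we may write $P = \rho_k(\mu)$ and chain
$$\langle \rho_k(\mu)\actk{}^t\gamma^*, f\rangle = \langle \rho_k(\mu\actk\gamma), f\rangle = (\mu\actk\gamma)(f) = \mu(\gamma \cdot_k f) = \langle \rho_k(\mu), \gamma \cdot_k f\rangle,$$
where the outer two equalities are the pointwise identity and the middle one is the definition of the $\actk$-action on $\cD_k$.

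The only substantive computation is the pointwise identity, and I expect the main (minor) obstacle there to be the bookkeeping of signs and binomial coefficients; the equivariance part is a formal consequence of the pointwise identity together with the computation made just before the statement.
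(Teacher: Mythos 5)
Your proposal is correct and follows essentially the same route as the paper, whose proof is precisely the ``calcul explicite'' from the moment expansion $\rho_k(\mu)=\sum_{j=0}^{k-2}(-1)^j\binom{k-2}{j}\mu(z^j)\XX^j$, the factor $(-1)^j/\binom{k-2}{j}$ in the pairing being designed to cancel as you observe. Your additional formal verification of the equivariance of the lower pairing (via surjectivity of $\rho_k$ and the identity $\rho_k(\mu\actk\gamma)=\rho_k(\mu)\actk{}^t\gamma^*$ established just before the statement) is sound and merely makes explicit what the paper leaves implicit.
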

\begin{proof}
Calcul explicite à partir de la formule
\begin{equation*}
\begin{split}
\rho_k(\mu)&=\sum_{j=0}^{k-2} (-1)^j \binom{k-2}{j}\mu(z^j) \XX^j\,.
\end{split}
\end{equation*}
\end{proof}
Soit $\cau: \cD \to \Qp[[w]]$ la transformation de Cauchy formelle
 donnée par la série formelle
\begin{equation*}
 \caum{\mu}=\sum_{j=0}^\infty \mu(z^j) w^j = \mu\paren{\frac{1}{1-zw}}.
\end{equation*}
Soit $\cH$ le sous-anneau de $\Qp[[w]]$ formé
des fonctions analytiques sur tout disque $B(0,\rho)$ pour
$\rho < 1$, c'est-à-dire dont les coefficients vérifient
$\sup_n |a_n| \rho^n < \infty$ pour tout $\rho<1$.
Définissons les sous-modules de $\cD$ munis de l'action
induite de $\cD_k$

\begin{equation*}
\bD_k(\Zp) = \cau^{-1}(\Zp[[w]]) \subset
\bD_k= \cau^{-1}(\Qp\otimes \Zp[[w]])
\subset
\bD^{\dag}_k= \cau^{-1}(\cH).
\end{equation*}
\begin{lem} Les sous-modules $\bD_{k}^{\dag}$ et $\bD_k$ de $\cD_k$ sont stables
par $\Sigma_0(p)$.
\end{lem}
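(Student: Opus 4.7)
The plan is to derive an explicit formula for $\cau(\mu\actk\gamma)$ in terms of $\cau(\mu)$, and then verify that the operations involved preserve each filtration. Direct computation from the action formula gives, for $\gamma=\smallmat{a&b\\c&d}\in\Sigma_0(p)$,
\begin{equation*}
\gamma\cdot_k\frac{1}{1-zw} \;=\; \frac{(a+cz)^{k-1}}{a-bw}\cdot\frac{1}{1-zW(w)},\qquad W(w):=\frac{-c+dw}{a-bw}.
\end{equation*}
Expanding $(a+cz)^{k-1}=a^{k-1}\sum_{n\geq 0}\binom{k-1}{n}(cz/a)^n$ (a $p$-adically convergent series in $z$, since $p\mid c$) and using the elementary identity
$$\int \frac{z^n}{1-zW}\,d\mu \;=\; F_n(W) \;:=\; W^{-n}\Bigl(\cau(\mu)(W)-\sum_{i<n}\mu(z^i)W^i\Bigr),$$
integration against $\mu$ yields
\begin{equation*}
\caum{\mu\actk\gamma}(w) \;=\; \frac{a^{k-1}}{a-bw}\sum_{n\geq 0}\binom{k-1}{n}(c/a)^n\,F_n\paren{W(w)}.
\end{equation*}

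The decisive point is that the conditions $p\nmid a$ and $p\mid c$ force $W(w)$ to send the open unit disk into itself: for $|w|_p<1$ one has $|a-bw|_p=|a|_p=1$ and $|-c+dw|_p\leq\max(p^{-1},|w|_p)<1$. Equivalently, $W(w)\in p\Zp+w\Zp[[w]]$, the factor $1/(a-bw)$ lies in $\Zp[[w]]$, and the scalars $\binom{k-1}{n}(c/a)^n$ have $p$-adic absolute value at most $p^{-n}$.

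Stability of $\bD_k^\dag$ is then immediate. If $\cau(\mu)\in\cH$, each $F_n$ is again in $\cH$ because its numerator vanishes to order $\geq n$ at the origin; then $F_n\paren{W(w)}\in\cH$ since $W$ preserves the open unit disk, the series in $n$ converges uniformly on every closed subdisk thanks to the geometric decay of the scalars, and multiplication by $1/(a-bw)\in\cH$ keeps us in $\cH$. For $\bD_k$: if $\cau(\mu)$ has bounded coefficients then so does each $F_n$; the inclusion $W(w)^m\in(p,w)^m\Zp[[w]]$ gives $|[W(w)^m]_\ell|_p\leq p^{-\max(0,m-\ell)}$, so the coefficients of $F_n\paren{W(w)}$ stay bounded by the same constant as those of $F_n$; summation over $n$ against scalars of size $\leq p^{-n}$ converges coefficient by coefficient while preserving the bound, and multiplication by $1/(a-bw)\in\Zp[[w]]$ closes the argument.

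The main obstacle is really Step one: finding the rearrangement of $\gamma\cdot_k(1-zw)^{-1}$ that, after integration against $\mu$, regroups cleanly into the shifted Cauchy transforms $F_n\paren{W(w)}$. Once the formula is in hand, both filtration checks are mechanical ultrametric estimates whose only non-trivial input is the containment $c/a\in p\Zp$ furnished by the definition of $\Sigma_0(p)$.
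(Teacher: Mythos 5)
Your argument is correct and is essentially the paper's own proof in a different packaging: both rest on an explicit formula for $\caum{\mu\actk\gamma}$ in terms of $\caum{\mu}$, followed by ultrametric estimates whose only inputs are $c/a\in p\Zp$ and $1/(a-bw)\in\Zp[[w]]$. Where the paper expands the moments $\int(a+cz)^{k-2-n}(b+dz)^n\,d\mu(z)$ via a double binomial series, you resum that same series into shifted Cauchy transforms composed with $W(w)=\frac{-c+dw}{a-bw}$, and the resulting bounds (for $\bD_k^\dag$, $\norm{\caum{\mu\actk\gamma}}_\rho\leq\norm{\caum{\mu}}_\rho$ for $|c|_p\leq\rho<1$; for $\bD_k$, preservation of the coefficient bound) coincide with the paper's.
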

\begin{proof}
Si $\gamma=\smallmat{a&b\\c&d} \in \Sigma_0(p)$ et $\mu\in \cD$,
on a
\begin{equation}
\label{formule}
\begin{split}
{\actdet{\gamma}{k-1}}\int z^n d (\mu\actk \gamma)(z)
&=\int (a+cz)^{k-2-n} (b+dz)^n d \mu(z)\\
&=
a^{k-2-n} \sum_{l=0}^n \sum_{j=0}^\infty \binom{k-2-n}{j} (c/a)^{j}
\binom{n}{l} b^{n-l} d^l \mu(z^{j+l})
\end{split}
\end{equation}
et donc
\begin{equation*}
\caum{\mu\actk \gamma}(w) =
  {\actdet{\gamma}{-k+1}}\sum_{n=0}^\infty a^{k-2-n}
  \paren{\sum_{l=0}^n \sum_{j=0}^\infty\binom{k-2-n}{j} (c/a)^{j}
  \binom{n}{l} b^{n-l} d^l \caum{\mu}_{j+l}} w^n.
\end{equation*}
Remarquons que les coefficients binomiaux
$\binom{l}{j}$ pour $l\in \ZZ$ et $j\geq 0$ appartiennent à $\Zp$.
Si $\gamma \in \Sigma_0(p)$,
$p$ divise $c$ et $a$ est une unité en $p$.

Supposons que $\mu \in \bD_k^\dag$.
On déduit facilement de la formule que
$$\norm{\caum{\mu\actk\gamma}}_\rho
  \leq \sup_{j\geq 0} \paren{\frac{|c|_p}{\rho}}^j \norm{\caum{\mu}}_\rho
  \leq \norm{\caum{\mu}}_\rho
$$
pour $\rho$ vérifiant $|c|_p\leq \rho< 1$, donc pour tout $\rho <1$.
Ce qui montre que
$\mu\actk \gamma$ appartient à $\bD_k^\dag$. Si $\mu\in \bD_k$, le
même calcul montre que
$$\norm{\caum{\mu\actk\gamma}}_\rho \leq \norm{\caum{\mu}}_1.$$
Donc $\mu\actk \gamma$ appartient à $\bD_k$. On en déduit le lemme.
\end{proof}
Lorsque $c=0$, on a simplement
\begin{equation*}
\caum{\mu\actk \begin{pmatrix}a&b\\0&d\end{pmatrix}}(w) =
{\actdet{\gamma}{1-k}} a^{k-1} \frac{1}{a-bw}
   \caum{\mu}\paren{\frac{d w}{a-b w}}.
\end{equation*}
Posons $\gamma_1=\smallmat{1&1\\0&1}$.
\begin{lem}
L'action de $\gamma_1$ sur $\cD$ ne dépend pas de $k$.
\begin{enumerate}
\item
Il n'existe pas de distribution non nulle $\mu$ tel que
$\mu\actk \gamma_1 = \mu$.
\item
On a
$$\ord_w(\caum{\mu}) <\ord_w(\caum{\mu\act(\gamma_1-1)}).$$
et
$\cD\actk (\gamma_1-1)$
est formé des distributions $\mu$ telles que $\int d\mu=0$,
i.e. telles que le coefficient constant de $\caum{\mu}$ est nul.
\end{enumerate}
\end{lem}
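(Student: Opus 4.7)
The plan is to transcribe everything into the language of moments $a_n := \mu(z^n)$, since $\gamma_1$ acts on moments by a simple explicit formula. First, independence of $k$ is immediate: because $\gamma_1 = \smallmat{1&1\\0&1}$ has $a = 1$ and $c = 0$, one has $\gamma_1 \cdot_k f(z) = 1^{k-2} f(z+1) = f(z+1)$, with no $k$ appearing. Specializing the $c=0$ formula (displayed just before the lemma) to $\gamma_1$ then gives
\begin{equation*}
\caum{\mu\act\gamma_1}(w) = \frac{1}{1-w}\caum{\mu}\paren{\frac{w}{1-w}},
\end{equation*}
or, equivalently (from $\mu\act\gamma_1(z^m) = \mu((z+1)^m)$), the $m$-th coefficient of $\caum{\mu\act(\gamma_1 - 1)}$ equals
\begin{equation*}
\sum_{n=0}^{m-1}\binom{m}{n} a_n,
\end{equation*}
an empty sum (hence zero) for $m = 0$. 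This formula is the engine of everything that follows.

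From it, the inequality in (2) is a direct check. Set $N = \ord_w\caum{\mu}$, so that $a_n = 0$ for $n < N$ and $a_N \neq 0$. For $m \leq N$ every $a_n$ in the sum satisfies $n < m \leq N$ and vanishes, while for $m = N+1$ the sum collapses to $\binom{N+1}{N} a_N = (N+1) a_N$, nonzero in $\Qp$ since $N + 1$ is a nonzero integer. Part (1) is an immediate consequence: any nonzero $\mu$ has $\caum{\mu} \neq 0$ with finite $\ord_w$, so $\caum{\mu\act(\gamma_1 - 1)}$ is nonzero and thus $\mu\act\gamma_1 \neq \mu$.

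One inclusion in the characterization of $\cD\act(\gamma_1-1)$ is already built into the coefficient formula (the $m = 0$ coefficient always vanishes). For the converse, given $\nu \in \cD$ with moments $b_m = \nu(z^m)$ and $b_0 = 0$, I would construct the desired preimage by the unique triangular recurrence
\begin{equation*}
a_0 = b_1, \qquad a_{m-1} = \frac{1}{m}\paren{b_m - \sum_{n=0}^{m-2}\binom{m}{n} a_n} \quad (m \geq 2),
\end{equation*}
which solves $\sum_{n=0}^{m-1}\binom{m}{n} a_n = b_m$ for every $m \geq 1$. The only non-routine point is to check that this reconstructed moment sequence genuinely defines an element of $\cD$, and this is automatic once one identifies $\cD$ with its formal moment sequences via $\cau$. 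The main conceptual step of the whole argument is thus the coefficient identity in the first paragraph; everything else reduces to triangular linear algebra over $\Qp$.
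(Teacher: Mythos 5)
Your proof is correct and follows essentially the same route as the paper: the key coefficient identity $\caum{\mu\act(\gamma_1-1)} = \sum_{n}\bigl(\sum_{l=0}^{n-1}\binom{n}{l}\mu(z^l)\bigr)w^n$ and the triangular system for surjectivity onto $\{\mu : \int d\mu = 0\}$ are exactly the paper's. The only cosmetic difference is that you deduce (1) from the order computation in (2) (the $(N+1)a_N\neq 0$ coefficient), while the paper runs the equivalent fixed-point calculation on $f(w)=\frac{1}{1-w}f\paren{\frac{w}{1-w}}$ directly; both hinge on the same nonvanishing of $n+1$ in $\Qp$.
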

\begin{proof}
On a pour tout entier $k\in \ZZ$
\begin{equation*}
\caum{\mu\actk \gamma_1}(w)
=\frac{1}{1-w} \caum{\mu}\paren{\frac{w}{1 - w}}.
\end{equation*}
Soit $n\geq 0$ et $f(w)$ un élément de $\Qp[[w]]$
de la forme $w^n + \alpha w^{n+1} +O(w^{n+2})$ tel que
$f(w)=\frac{1}{1-w} f(\frac{w }{1-w})$.
On a
\begin{equation*}
\begin{split}
\frac{1}{1-w} f\paren{\frac{w}{1-w}} &=
 w^n (1+w)^{n+1} + \alpha w^{n+1} (1+w)^{n+1} +O(w^{n+2})\\
 &=
w^n +(\alpha+n+1)w^{n+1} +O(w^{n+2}).
\end{split}
\end{equation*}
L'équation $f(w)=\frac{1}{1-w} f\paren{\frac{w }{1 - w}}$ implique que
$$
w^n + \alpha w^{n+1} =w^n +(\alpha+n+1)w^{n+1}
$$
ce qui est impossible. D'où la première assertion. On a
\begin{equation*}
\begin{split}
\caum{\mu\actk (\gamma_1-1)} &=
\sum_{n=0}^\infty\left (
\sum_{l=0}^{n-1} \binom{n}{l}\int z^{l}d \mu \right ) w^n.
\end{split}
\end{equation*}
Son terme constant est nul. Plus généralement, on obtient
$$\ord_w(\caum{\mu}) <\ord_w(\caum{\mu\act(\gamma_1-1)}).$$
Finalement, l'équation $\caum{\mu\actk (\gamma_1-1)} = \sum_{n > 0} a_n w^n$
est équivalente à un système triangulaire :
\begin{equation*}
\begin{cases}
x_0 &= a_1\\
x_0 + 2x_1 &= a_2\\
\cdots\\
x_0 + n x_1 + \cdots + n x_{n-1}&=a_n\\
\cdots\\
\end{cases}
\end{equation*}
et a donc une solution, ce qui termine la démonstration du lemme.
\end{proof}
On note $\cU_p$ l'opérateur de $\bD_k^{\dag}$ défini par
$\cU_p(\mu)=\sum_{b=0}^{p-1} \mu\actk \betab{b}{p}$
avec $\betab{b}{p}=\smallmat{1&b\\0&p}$.
\begin{lem}
\label{up}
Si $\mu \in \bD_k^{\dag}$, alors
$\cU_p(\mu)$ appartient à $\bD_k$ et
$$ \norm{ \caum{\cU_p(\mu)} }_1 \leq \norm{ \caum{\mu} }_{p^{-1}}.$$
Si $\mu \in \bD_k(\Zp)$, alors
$\cU_p(\mu) $ appartient à $\bD_k(\Zp)$.
\end{lem}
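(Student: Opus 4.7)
Mon approche consiste à exploiter la formule explicite rappelée juste avant ce lemme pour l'action de $\betab{b}{p}=\smallmat{1&b\\0&p}$ (cas triangulaire supérieur avec $c=0$), qui donne
\begin{equation*}
\caum{\mu\actk \betab{b}{p}}(w) = \frac{1}{1-bw} \caum{\mu}\paren{\frac{p w}{1-b w}}.
\end{equation*}
En sommant sur $b=0,\ldots,p-1$, on obtient une expression close de $\caum{\cU_p(\mu)}$ dont il ne reste qu'à estimer les coefficients.

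Pour cela, je poserais $\caum{\mu}(w)=\sum_n a_n w^n$ et je développerais chaque facteur $(1-bw)^{-n-1}$ en série géométrique, ce qui permet d'identifier le coefficient de $w^N$ dans $\caum{\cU_p(\mu)}$ sous la forme
\begin{equation*}
c_N = \sum_{n=0}^N \binom{N}{n}\, a_n\, p^n\, S_{N-n}, \qquad S_m := \sum_{b=0}^{p-1} b^m \in \ZZ.
\end{equation*}
Le point-clé est alors que $S_m$ est un entier rationnel et que $\binom{N}{n}\in\Zp$. L'inégalité ultramétrique donne
\begin{equation*}
|c_N|_p \leq \sup_{0\le n\le N} |a_n|_p\, p^{-n} \leq \norm{\caum{\mu}}_{p^{-1}},
\end{equation*}
borne indépendante de $N$. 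En prenant le supremum sur $N$, j'obtiens $\norm{\caum{\cU_p(\mu)}}_1 \leq \norm{\caum{\mu}}_{p^{-1}}$; cette quantité est finie dès que $\mu\in \bD_k^\dag$, ce qui établit à la fois l'appartenance $\cU_p(\mu)\in \bD_k$ et l'inégalité souhaitée.

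Pour la seconde assertion, si $\mu\in\bD_k(\Zp)$ alors $a_n\in\Zp$ pour tout $n$, et la formule explicite pour $c_N$ montre immédiatement que $c_N\in\Zp$, c'est-à-dire $\cU_p(\mu)\in\bD_k(\Zp)$. La seule étape un peu délicate est le réarrangement combinatoire faisant émerger les sommes de puissances $S_m$; une fois celui-ci effectué, les estimations $p$-adiques découlent trivialement des inégalités ultramétriques.
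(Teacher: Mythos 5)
Votre preuve est correcte et suit essentiellement la m\^eme d\'emarche que celle de l'article : d\'eveloppement de $\frac{1}{1-bw}\caum{\mu}\paren{\frac{pw}{1-bw}}$ en s\'erie enti\`ere, int\'egralit\'e $p$-adique des coefficients binomiaux et des puissances de $b$, puis in\'egalit\'e ultram\'etrique pour majorer chaque coefficient par $\norm{\caum{\mu}}_{p^{-1}}$. La seule diff\'erence est cosm\'etique : vous sommez sur $b$ avant d'estimer, ce qui fait appara\^itre les sommes de puissances $S_m$, alors que l'article majore s\'epar\'ement chaque $\caum{\mu\actk\betab{b}{p}}$.
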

\begin{proof} On a
\begin{equation}
\label{cauchybeta}
\begin{split}
\caum{\mu\actk \betab{b}{p}}(w)
&=\frac{1}{1-b w} \caum{\mu}\paren{\frac{p w}{1-b w}}
=\sum_{j=0}^\infty \mu(z^j)\frac{p^j w^j}{(1-b w)^{j+1}}\\
&=\sum_{j=0}^\infty p^j\mu(z^j)\sum_{i=0}^\infty \binom{-j-1}{i}(-b)^i w^{i+j}\\
&=
\sum_{n=0}^{\infty}\left (\sum_{j=0}^n
p^j\mu(z^j)\binom{-j-1}{n-j} (-b)^{n-j} \right)w^n.
\end{split}
\end{equation}
Si $\mu \in \bD_k^{\dag}$, alors $|p^j\mu(z^j)|_p < \norm{\caum{\mu}}_{p^{-1}}$.
On en déduit que
$\norm{\caum{\cU_p(\mu)} }_1 \leq \norm{\caum{\mu}}_{p^{-1}}$
et que $\cU_p(\mu)$ appartient à $\bD_k$.
\end{proof}
\begin{lem}\label{noyau}
Le noyau de $\rho_k$ est stable par $\cU_p$.
Si $\mu \in \bD_k(\Zp)$ est dans le noyau de $\rho_k$,
alors
$\cU_p(\mu)$ appartient à $p^{k-1}\bD_k(\Zp)$.
\end{lem}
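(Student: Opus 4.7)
Le plan se d\'ecompose en deux temps. Premi\`erement, la formule explicite rappel\'ee ci-dessus
$$\rho_k(\mu) = \sum_{j=0}^{k-2} (-1)^j \binom{k-2}{j}\mu(z^j)\,\XX^j$$
identifie $\ker \rho_k$ aux distributions telles que $\mu(z^j) = 0$ pour $0 \leq j \leq k-2$ (les coefficients binomiaux \'etant inversibles dans $\Qp$). Pour la stabilit\'e par $\cU_p$, j'invoquerais l'identit\'e $\rho_k(\mu\actk \gamma) = \rho_k(\mu)\actk {^t\gamma^*}$ d\'ej\`a \'etablie~: elle exprime que $\rho_k$ entrelace (\`a la transposition pr\`es sur la cible) les actions de $\Sigma_0(p)$, donc son noyau est stable par chaque $\mu \mapsto \mu\actk \betab{b}{p}$, puis, par somme, par $\cU_p$.

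Pour la seconde assertion, je repars de la formule (\ref{cauchybeta})~:
$$\caum{\mu\actk \betab{b}{p}}(w)
  = \sum_{n=0}^{\infty}\paren{\sum_{j=0}^n p^j\mu(z^j)\binom{-j-1}{n-j} (-b)^{n-j}}w^n.$$
L'hypoth\`ese $\mu \in \ker \rho_k$ annule toutes les contributions avec $j \leq k-2$; il ne reste que les indices $j \geq k-1$, pour lesquels $p^j$ est un multiple de $p^{k-1}$. Comme $\mu \in \bD_k(\Zp)$ force $\mu(z^j) \in \Zp$ pour tout $j$, et que les $\binom{-j-1}{n-j}$ ainsi que $(-b)^{n-j}$ sont dans $\Zp$ (ce qui a d\'ej\`a servi dans la preuve du lemme pr\'ec\'edent), chaque coefficient de $\caum{\mu\actk \betab{b}{p}}$ appartient \`a $p^{k-1}\Zp$. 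En sommant sur $b$, j'obtiens $\caum{\cU_p(\mu)} \in p^{k-1}\Zp[[w]]$, c'est-\`a-dire $\cU_p(\mu) \in p^{k-1}\bD_k(\Zp)$.

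L'argument ne pr\'esente pas de difficult\'e notable~: l'essentiel r\'eside dans la combinaison de l'annulation des moments de degr\'es $\leq k-2$ (cons\'equence directe de l'appartenance au noyau) avec le facteur $p^j$ d\'ej\`a pr\'esent dans la formule de $\caum{\mu\actk\betab{b}{p}}$, qui fournit exactement la divisibilit\'e par $p^{k-1}$ annonc\'ee. Le point qui pourrait sembler le moins automatique est la stabilit\'e pure du noyau, mais la relation $\rho_k(\mu\actk\gamma) = \rho_k(\mu)\actk{^t\gamma^*}$ la donne sans effort suppl\'ementaire.
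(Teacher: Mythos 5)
Votre d\'emonstration est correcte et suit pour l'essentiel la m\^eme voie que celle de l'article~: la seconde assertion repose exactement sur la formule \eqref{cauchybeta} combin\'ee \`a l'annulation des moments $\mu(z^j)$ pour $j\leq k-2$ et \`a l'int\'egralit\'e des coefficients restants. Seule la justification de la stabilit\'e du noyau diff\`ere l\'eg\`erement (vous invoquez l'\'equivariance $\rho_k(\mu\actk\gamma)=\rho_k(\mu)\actk{}^t\gamma^*$ l\`a o\`u l'article utilise le fait que $\betab{b}{p}$ et $\cU_p$ stabilisent $w^n\Qp[[w]]$), mais les deux arguments sont imm\'ediats.
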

\begin{proof}
On déduit la première assertion du fait que
le noyau de $\rho_k$ est formé des distributions $\mu$ tels que
$\caum{\mu}$ appartient à $w^{k-1} \Qp[[w]]$ et que
les opérateurs $\betab{a}{p}$ et $\cU_p$ stabilisent $w^n\Qp[[w]]$
pour tout entier $n$.
La formule \eqref{cauchybeta} implique précisément que si $\caum{\mu}\in
w^n\Zp[[w]]$, alors
$\caum{\mu\actk \betab{a}{p}} \in p^nw^n\Zp[[w]]$.
Ce qui montre la deuxième assertion en prenant $n=k-1$.
\end{proof}
Le lemme suivant ne nous sera pas utile. Nous le donnons pour être complet.
\begin{lem}Si $\ell$ est un nombre premier différent de $p$,
$$
\caum{\mu\actk T_\ell}=\sum_{a=0}^{\ell-1}
   \frac{1}{1-a w}\caum{\mu}\paren{\frac{\ell w}{1-a w}}
   + \ell^{k-2} \caum{\mu}\paren{\frac{w}{\ell}}.
$$
\end{lem}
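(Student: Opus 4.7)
The plan is to reduce the computation to the explicit formula for the action of upper triangular elements of $\Sigma_0(p)$ displayed just after the stability lemma: for matrices with vanishing lower-left entry,
$$\caum{\mu\actk \smallmat{a & b \\ 0 & d}}(w) = \frac{a^{k-1}}{a - b w}\,\caum{\mu}\paren{\frac{d w}{a - b w}}.$$
First, I would write the standard coset decomposition of $T_\ell$ for $\ell \neq p$:
$$\mu \actk T_\ell = \sum_{a=0}^{\ell-1} \mu \actk \smallmat{1 & a \\ 0 & \ell} \;+\; \mu \actk \smallmat{\ell & 0 \\ 0 & 1}.$$
All these representatives are upper triangular and lie in $\Sigma_0(p)$: the condition $p \mid 0$ is automatic, while the upper-left entries $1$ and $\ell$ are both prime to $p$ (using $\ell \neq p$).

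Next, I would apply the displayed formula term by term. For the first family, specializing $(a,b,d) = (1, a, \ell)$ produces $\frac{1}{1-a w}\caum{\mu}(\ell w / (1 - a w))$, and summing over $a \in \{0, \dots, \ell-1\}$ yields the first sum in the claim. For the remaining matrix, $(a,b,d) = (\ell, 0, 1)$ gives $\frac{\ell^{k-1}}{\ell}\caum{\mu}(w/\ell) = \ell^{k-2}\caum{\mu}(w/\ell)$, which is the extra term. Adding the two contributions gives exactly the asserted identity.

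There is no real obstacle: the argument is a direct substitution into a formula already proved in the paper, and the hypothesis $\ell \neq p$ serves only to ensure that the chosen coset representatives lie in $\Sigma_0(p)$ so that the weight-$k$ action (and hence the formula) is available.
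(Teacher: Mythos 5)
Your proof is correct. The paper itself gives no proof of this lemma --- it is stated without demonstration, preceded by the remark that it will not be needed and is included only for completeness --- so there is no argument to compare against; your verification (the standard coset decomposition $\mu\actk T_\ell=\sum_{a=0}^{\ell-1}\mu\actk\smallmat{1&a\\0&\ell}+\mu\actk\smallmat{\ell&0\\0&1}$ followed by term-by-term substitution into the $c=0$ formula $\caum{\mu\actk\smallmat{a&b\\0&d}}(w)=\frac{a^{k-1}}{a-bw}\,\caum{\mu}\paren{\frac{dw}{a-bw}}$ already displayed in the paper) is exactly the natural one, and your observation that $\ell\neq p$ is what keeps the upper-left entries prime to $p$, hence the representatives in $\Sigma_0(p)$, correctly accounts for the hypothesis.
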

\section{Modules d'approximation}
La filtration de $\bD_k^\dag$ induite par $\cau^{-1}(w^j\Qp[[w]])$ n'est pas stable par l'action
de $\Gamma_0$. Introduisons comme dans \cite{mgreenberg}
\footnote{Dans \cite{PS}, une filtration légèrement différente est utilisée.}
les filtrations $(\Fil^M)_{M\geq 0}$ sur $\bD_k^\dag$ et $\bD_k(\Zp)$
définies par
\begin{equation*}
\begin{split}
\Fil^0 \bD_k^\dag =& \{ \mu \in \bD_k^\dag \text{ tel que } \caum{\mu} \in w^{k-1} \Qp[[w]]\},\\
\Fil^M \bD_k^\dag=& \{ \mu \in \Fil^0 \bD_k^\dag\text{ tel que }\mu(z^{k-2+j}) \in p^{M-j+1} \Zp \text{ pour } j=1, \cdots, M\}\\
=& \{ \mu \in \bD_k^\dag\text{ tel que }\caum{\mu} \in p^{M} w^{k-1} \Zp[w/p] + w^{M+k-1}\Qp[[w]]\},\\
\Fil^0 \bD_k(\Zp) =&\{ \mu \in \bD_k^\dag \text{ tel que } \caum{\mu} \in w^{k-1} \Zp[[w]]\}\subset \bD_k(\Zp),\\
\Fil^M \bD_k(\Zp) =& \{ \mu \in \Fil^0 \bD_k(\Zp)\text{ tel que }\mu(z^{k-2+j}) \in p^{M-j+1} \Zp \text{ pour } j=1, \cdots, M\},\\
=& \{ \mu \in \bD_k(\Zp)\text{ tel que }\caum{\mu} \in p^{M} w^{k-1} \Zp[w/p] + w^{M+k-1}\Zp[[w]].
\end{split}
\end{equation*}
Soit $\Gr^M \bD_k^\dag$ la graduation associée.
En particulier, $\Gr^0 \bD_k^\dag$ est isomorphe à $\Qp[w]_{k-2}$.
Les quotients $\Gr^M \bD_k = \bD_k(\Zp)/\Fil^M\bD_k(\Zp)=\bD_k^\dag/\Fil^M\bD_k^\dag$
pour $M\geq 0$
sont des $\Zp$-modules de type fini
et l'application naturelle
$$\mu \mapsto \sum_{j=0}^\infty \mu(z^j) w^j \mapsto (\mu(z^j))_{0\leq j \leq M-1}$$
induit un isomorphisme
$$\Gr^M \bD_k \to \prod_{j=0}^{k-2} \Zp \times \prod_{j=k-1}^{M-1} \ZZ/p^{M+1-j}\ZZ.$$
De plus, pour tout entier $s\geq -M$, on a l'inclusion
\begin{equation}
\label{subset}
p^{s} \Fil^{M} \bD_k^\dag \subset \Fil^{M+s} \bD_k^\dag
\end{equation}
qui induit une application (multiplication par $p^s$)
\begin{equation*}
\Gr^{M} \bD_k \overset{p^{s}}{\to} \Gr^{M+s} \bD_k.
\end{equation*}
\begin{lem}
Les ensembles $\Fil^M \bD_k(\Zp)$ sont stables par l'action de $\Gamma_0$.
\end{lem}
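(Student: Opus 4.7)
\emph{Esquisse de la d\'emonstration.}
Soit $\gamma=\smallmat{a&b\\c&d}\in\Gamma_0$, de sorte que $a,d$ sont des
unit\'es $p$-adiques et $p\mid c$, et soit $\mu\in\Fil^M\bD_k(\Zp)$. Le plan
est de v\'erifier d'abord que $\mu\actk\gamma\in\Fil^0\bD_k(\Zp)$, puis
d'\'etablir les congruences $(\mu\actk\gamma)(z^{k-2+j})\in p^{M-j+1}\Zp$
pour $j=1,\dots,M$.

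Pour la premi\`ere \'etape, la formule~\eqref{formule} montre imm\'ediatement
que $\mu\actk\gamma$ reste dans $\bD_k(\Zp)$ : tous les facteurs $a^{k-2-n}$,
$(c/a)^i$, $b^{n-l}$, $d^l$ et les coefficients binomiaux sont dans $\Zp$, et
$\mu(z^{i+l})\in\Zp$. Pour l'appartenance \`a $\Fil^0$, j'utiliserais la
$\Sigma_0(p)$-\'equivariance
$\rho_k(\mu\actk\gamma)=\rho_k(\mu)\actk{}^t\gamma^*$ \'etablie au paragraphe
pr\'ec\'edent : comme $\Fil^0\bD_k(\Zp)=\ker\rho_k\cap\bD_k(\Zp)$ et que les
deux facteurs sont stables sous $\Sigma_0(p)\supset\Gamma_0$, cela conclut.

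Pour la seconde \'etape, j'appliquerais \eqref{formule} avec $n=k-2+j$ et
contr\^olerais la valuation $p$-adique de chaque terme selon la position de
$m=i+l$. Si $m\leq k-2$, le moment $\mu(z^m)$ est nul. Si
$k-1\leq m\leq k-2+M$, la d\'efinition de $\Fil^M$ donne
$\mu(z^m)\in p^{M+k-1-m}\Zp$; combin\'ee \`a $(c/a)^i\in p^i\Zp$, elle place
le terme dans $p^{M+k-1-l}\Zp\subset p^{M+1-j}\Zp$ (car $l\leq k-2+j$). Si
$m\geq k-1+M$, on exploite uniquement $(c/a)^i$ : comme $i\geq M+k-1-l$, le
terme est encore dans $p^{M+k-1-l}\Zp\subset p^{M+1-j}\Zp$.

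L'obstacle principal, purement comptable, est d'orchestrer ces deux sources
de $p$-divisibilit\'e (celle provenant de $p\mid c$ via $(c/a)^i$ et celle
des moments de $\mu$ dans la plage interm\'ediaire) pour obtenir une borne
uniforme en $i$ et $l$; une fois la d\'ecomposition ci-dessus effectu\'ee,
tout se r\'eduit \`a la comparaison des exposants. Aucun probl\`eme de
convergence ne se pose puisque $(c/a)^i\to 0$ $p$-adiquement et que
$\mu(z^m)$ reste born\'e.
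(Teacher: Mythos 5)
Votre d\'emonstration est correcte et suit essentiellement la m\^eme voie que celle du texte : tout repose sur la formule~\eqref{formule} et sur la combinaison des deux sources de divisibilit\'e ($p\mid c$ via $(c/a)^i$ d'une part, les moments interm\'ediaires de $\mu$ d'autre part), votre analyse par cas selon la position de $m=i+l$ \'etant exactement le d\'ecompte que le texte sous-traite \`a \cite[Lemma 2]{mgreenberg}. La seule diff\'erence est de pr\'esentation : le texte factorise $\gamma=\smallmat{1&0\\c/a&1}\smallmat{a&b\\0&(ad-bc)/a}$ pour s\'eparer les deux contributions, tandis que vous les traitez simultan\'ement pour une matrice g\'en\'erale, ce qui fonctionne tout aussi bien.
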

\begin{proof}Voir \cite[Lemma 2]{mgreenberg}.
C'est une conséquence de la formule \eqref{formule}
appliquée aux deux matrices
$\smallmat{1&0\\c&1}$ et
$\smallmat{a&b\\0&d}$ puisque
$$\begin{pmatrix}a&b\\c&d\end{pmatrix}=
\begin{pmatrix}1&0\\c/a&1\end{pmatrix}\begin{pmatrix}a&b\\0&(ad-bc)/a\end{pmatrix}
.$$
\end{proof}
\begin{lem}\label{crucial0}
Soit $\mu \in \Fil^M \bD_k(\Zp)$.
Si $\beta=\smallmat{1&b\\0&p^s}$
avec $b\in \ZZ_p$ et $s\geq 1$, on a
$$\mu\actk \beta \in p^{s(k-1)-t}\Fil^{M+t}\bD_k(\Zp)$$
pour $t \leq s(k-1)$.
En particulier, pour $k \geq 2$ et $s = t = 1$, on a
$$\mu\actk \beta \in p^{k-2}\Fil^{M+1}\bD_k(\Zp).$$
\end{lem}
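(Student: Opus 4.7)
Le plan consiste \`a calculer $\caum{\mu\actk\beta}$ explicitement puis \`a en d\'eduire la filtration. Comme $\beta=\smallmat{1&b\\0&p^s}$ est triangulaire sup\'erieure, la formule explicite donn\'ee apr\`es le lemme~\ref{up} (cas $c=0$) devient
\begin{equation*}
\caum{\mu\actk \beta}(w) = \frac{1}{1-bw}\,\caum{\mu}\paren{\frac{p^s w}{1-bw}}.
\end{equation*}
En d\'eveloppant comme dans \eqref{cauchybeta} avec $p$ remplac\'e par $p^s$, le coefficient de $w^m$ dans $\caum{\mu\actk\beta}$ s'\'ecrit
\begin{equation*}
d_m = \sum_{n=0}^m p^{sn}\mu(z^n)\binom{-n-1}{m-n}(-b)^{m-n},
\end{equation*}
o\`u les coefficients binomiaux appartiennent \`a $\Zp$ et $b\in\Zp$.

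L'hypoth\`ese $\mu\in \Fil^M\bD_k(\Zp)$ se traduit par $\mu(z^n)=0$ pour $n<k-1$ et $\mu(z^n)\in p^{\max(M+k-1-n,\,0)}\Zp$ pour $n\geq k-1$. Chaque terme contribuant \`a $d_m$ appartient donc \`a $p^{f(n)}\Zp$ o\`u l'on pose $f(n):=sn+\max(M+k-1-n,\,0)$, pour $k-1\leq n\leq m$. Je montrerais alors la minoration uniforme $f(n)\geq s(k-1)+M$ par disjonction de cas : pour $n\leq M+k-1$, $f(n)=M+k-1+(s-1)n$ est minimis\'ee en $n=k-1$ puisque $s\geq 1$ ; pour $n\geq M+k-1$, $f(n)=sn\geq s(M+k-1)\geq s(k-1)+M$. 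D'o\`u $d_m\in p^{s(k-1)+M}\Zp$ pour tout $m$, et $d_m=0$ pour $m<k-1$.

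Il resterait enfin \`a traduire cette estimation en une appartenance \`a $p^{s(k-1)-t}\Fil^{M+t}\bD_k(\Zp)$. D'apr\`es la description des filtrations donn\'ee avant l'\'enonc\'e, ceci revient aux in\'egalit\'es $d_m\in p^{s(k-1)+M+k-1-m}\Zp$ pour $k-1\leq m\leq k-2+M+t$, et $d_m\in p^{s(k-1)-t}\Zp$ pour $m\geq k-1+M+t$ : la premi\`ere d\'ecoule du minorant uniforme via $m\geq k-1$, la seconde via $M+t\geq 0$. Le cas particulier $s=t=1$ et $k\geq 2$ en r\'esulte imm\'ediatement puisque $s(k-1)-t=k-2$. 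L'\'etape principale est la minoration uniforme de $f(n)$, qui utilise crucialement l'hypoth\`ese $s\geq 1$ ; le reste rel\`eve de la comptabilit\'e d'indices.
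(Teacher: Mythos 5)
Votre preuve est correcte et suit essentiellement la m\^eme d\'emarche que celle du texte : dans les deux cas on \'etablit d'abord l'appartenance plus forte $\mu\actk\beta\in p^{M+s(k-1)}\Fil^{0}\bD_k(\Zp)$ (le texte par substitution de $p^s w/(1-bw)$ dans la description $p^{M}w^{k-1}\Zp[w/p]+w^{M+k-1}\Zp[[w]]$, vous par la minoration coefficient par coefficient de $f(n)=sn+\max(M+k-1-n,0)$, ce qui revient au m\^eme), puis on conclut par l'inclusion \eqref{subset}. Rien \`a redire.
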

\begin{proof}
Soit $g$ l'image par $\cau$ d'un élément $\mu$ de $\Fil^M \bD_k(\Zp)$.
Comme
$$g \in p^{M}w^{k-1} \Zp[w/p] + w^{M+k-1}\Zp[[w]],$$
on a, pour $s\geq 1$ et pour $h\in \Zp^\times+w\Zp[[w]]$,
\begin{equation*}
\begin{split}
g(p^s w h(w))
&\in
p^{M+s(k-1)}w^{k-1} \Zp[w] + p^{s(M+k-1)} w^{M+k-1}\Zp[[w]]
\\ &\quad\quad\subset p^{M+s(k-1)} w^{k-1}\Zp[[w]].
\end{split}
\end{equation*}
Comme $\caum{\mu\actk \beta}(w) = \frac{1}{1 -b w} \caum{\mu}(p^s\frac{ w}{1-bw})$,
on applique la formule précédente à $h(w)=p^s \frac{w}{1-bw}$,
d'où
\begin{equation*}
\begin{split}
\mu\actk \beta \in p^{M+s(k-1)}\Fil^{0} \bD_k
\subset p^{s_1}\Fil^{M+s_2} \bD_k
\end{split}
\end{equation*}
pour tout couple d'entiers positifs $(s_1,s_2)$ tel que $s_1 + s_2 =s(k-1)$.
\end{proof}
\begin{cor}
\label{crucial}
Soit $\mu \in \Fil^M \bD_k(\Zp)$ et $s\geq 1$.
Pour tout entier $t$ inférieur ou égal à $s(k-1)$, on a
$$\cU_p^s(\mu) \in p^{s(k-1)-t}\Fil^{M+t}\bD_k(\Zp).$$
En particulier, si $s(k-1) \geq 2$,
$$\cU_p^s(\mu) \in p\Fil^{M+1}\bD_k(\Zp).$$
\end{cor}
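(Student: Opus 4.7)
The plan is to reduce the corollary to an application of Lemma \ref{crucial0} by iterating the definition of $\cU_p$ and observing that a product of matrices $\betab{b}{p}$ is again a matrix of the type treated in that lemma.

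More precisely, I would first compute
\begin{equation*}
\betab{b_1}{p}\,\betab{b_2}{p}\cdots \betab{b_s}{p}
=\begin{pmatrix}1 & b_s + b_{s-1} p + \cdots + b_1 p^{s-1}\\ 0 & p^s\end{pmatrix},
\end{equation*}
and note that as $(b_1,\dots,b_s)$ ranges over $\{0,\dots,p-1\}^s$, the top-right entry $b$ ranges bijectively over $\{0,1,\dots,p^s-1\}$. Hence by the very definition of $\cU_p$,
\begin{equation*}
\cU_p^s(\mu) = \sum_{b=0}^{p^s-1} \mu\actk \begin{pmatrix}1 & b \\ 0 & p^s\end{pmatrix}.
\end{equation*}

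Lemma \ref{crucial0}, applied to each matrix $\smallmat{1 & b \\ 0 & p^s}$ with $b \in \{0,\dots,p^s-1\} \subset \Zp$, shows that each term $\mu\actk \smallmat{1 & b \\ 0 & p^s}$ belongs to $p^{s(k-1)-t}\Fil^{M+t}\bD_k(\Zp)$ for any $t \leq s(k-1)$. Since $p^{s(k-1)-t}\Fil^{M+t}\bD_k(\Zp)$ is a $\Zp$-submodule, the sum $\cU_p^s(\mu)$ lies in the same submodule, which gives the first assertion.

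For the ``in particular'' part, assume $s(k-1) \geq 2$ and apply the first assertion with $t = s(k-1) - 1$; this yields $\cU_p^s(\mu) \in p\,\Fil^{M+s(k-1)-1}\bD_k(\Zp)$. Since the filtration is decreasing and $s(k-1)-1 \geq 1$, we have $\Fil^{M+s(k-1)-1}\bD_k(\Zp) \subset \Fil^{M+1}\bD_k(\Zp)$, whence the stated conclusion. No real obstacle arises: the whole point of Lemma \ref{crucial0} was to isolate the one-step estimate, and the corollary is just the compatibility of that estimate with the explicit coset decomposition of $\cU_p^s$.
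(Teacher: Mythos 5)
Your proof is correct and follows exactly the route the paper intends: the corollary is stated without proof precisely because Lemma \ref{crucial0} is already formulated for a general matrix $\smallmat{1&b\\0&p^s}$ with $s\geq 1$, so the only missing ingredient is the coset decomposition $\cU_p^s(\mu)=\sum_{b=0}^{p^s-1}\mu\actk\smallmat{1&b\\0&p^s}$, which you supply and verify. Your deduction of the ``in particular'' clause (taking $t=s(k-1)-1$ and using that the filtration is decreasing, or equivalently taking $t=1$ directly) is also fine.
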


\section{Symboles modulaires à valeurs dans les distributions}
Soit $\Delta_0$ le sous-module de $\ZZ[\PP^1(\QQ)]$
formé des diviseurs de degré 0.
L'opérateur $\cU_p$ induit un endomorphisme de
$\Hom_{\Gamma_0}(\Delta_0,\bD_k)$.
Comme $\Delta_0$ est stable par $\Gl_2(\QQ)$ et
que $\Delta_0$ est de type fini comme $\ZZ[\Gamma_0]$-module,
le corollaire \ref{crucial} s'étend à
$\Hom_{\Gamma_0}(\Delta_0,\Fil^M \bD_k)$.

\begin{prop}
L'image de $\Hom_{\Gamma_0}(\Delta_0,\bD_k^{\dag})$ par $\cU_p$
est contenue dans $\Hom_{\Gamma_0}(\Delta_0,\bD_k(\Zp))$.
\end{prop}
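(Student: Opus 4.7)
The plan is to reduce this global assertion about modular symbols to the pointwise contraction furnished by Lemme \ref{up}, using the classical fact (due to Manin) that $\Delta_0$ is finitely generated as a $\ZZ[\Gamma_0]$-module, together with the $\Gamma_0$-stability of appropriate norms on Cauchy transforms.

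Fix $\Phi \in \Hom_{\Gamma_0}(\Delta_0,\bD_k^{\dag})$ and a finite system of generators $D_1,\ldots,D_r$ of $\Delta_0$ as a $\ZZ[\Gamma_0]$-module. Each $\caum{\Phi(D_i)}$ converges on the open unit disk, so $C := \max_i \norm{\caum{\Phi(D_i)}}_{p^{-1}}$ is finite. The proof of the stability lemma for $\bD_k^\dag$ actually gives the stronger estimate $\norm{\caum{\mu\actk\gamma}}_\rho \leq \norm{\caum{\mu}}_\rho$ whenever $\gamma \in \Sigma_0(p)$ and $|c|_p \leq \rho < 1$; taking $\rho = p^{-1}$ shows that $\Gamma_0$ preserves the $p^{-1}$-norm on Cauchy transforms. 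Writing an arbitrary $D \in \Delta_0$ as $D = \sum_j n_j \gamma_j D_{i_j}$ with $n_j \in \ZZ$ and $\gamma_j \in \Gamma_0$, the $\Gamma_0$-equivariance of $\Phi$, the ultrametric inequality, and the bound $|n_j|_p \leq 1$ yield $\norm{\caum{\Phi(D)}}_{p^{-1}} \leq C$ uniformly in $D$.

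Applying Lemme \ref{up} pointwise then gives $\norm{\caum{\cU_p(\Phi)(D)}}_1 \leq C$ for every $D$, placing $\cU_p(\Phi)(D)$ in $\bD_k$; the standard commutation of $\cU_p$ with the $\Gamma_0$-action on modular symbols, via the double-coset identity for the Hecke operator at $p$, ensures that $\cU_p(\Phi)$ remains $\Gamma_0$-equivariant.

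The main technical point is the promotion from uniformly bounded Cauchy coefficients in $\bD_k$ to genuine $\Zp$-integrality in $\bD_k(\Zp)$: this is absorbed by rescaling $\Phi$ by a suitable power of $p$ (chosen so that $C \leq 1$), which does not alter the lattice-theoretic content of the statement since $\Hom_{\Gamma_0}(\Delta_0,\bD_k^\dag)$ is a $\Qp$-vector space. The finite generation of $\Delta_0$ and the $\Gamma_0$-invariance of the $p^{-1}$-norm, rather than the analytic estimate itself, are what bridge the pointwise Lemme \ref{up} and the uniform statement of the proposition.
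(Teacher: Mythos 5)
Your core move --- reducing to Lemme \ref{up} applied to each summand of $\cU_p(\Phi)(D)=\sum_b \Phi(\betab{b}{p}D)\actk\betab{b}{p}$ --- is exactly the paper's proof, which consists of the single line asserting that the proposition follows from Lemme \ref{up}. The finite-generation and uniform-norm apparatus you build on top of it is not needed here: membership in $\bD_k$ is the \emph{pointwise} condition $\norm{\caum{\mu}}_1<\infty$, and the per-term estimate $\norm{\caum{\mu\actk\betab{b}{p}}}_1\leq\norm{\caum{\mu}}_{p^{-1}}$ extracted from the proof of Lemme \ref{up} (one does need the per-term version, since the distributions $\Phi(\betab{b}{p}D)$ vary with $b$, whereas the lemma as stated treats $\cU_p$ of a single $\mu$) already places $\cU_p(\Phi)(D)$ in $\bD_k$ for each $D$ separately; uniformity over $D$ buys nothing for this statement. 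The paper does invoke the finite generation of $\Delta_0$ as a $\ZZ[\Gamma_0]$-module, but only to extend Corollaire \ref{crucial} to $\Hom_{\Gamma_0}(\Delta_0,\Fil^M\bD_k)$, not for this proposition.

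The one substantive point is your final rescaling step, which does not deliver the statement as printed: replacing $\Phi$ by $p^m\Phi$ shows $p^m\cU_p(\Phi)\in\Hom_{\Gamma_0}(\Delta_0,\bD_k(\Zp))$, hence only $\cU_p(\Phi)\in\Hom_{\Gamma_0}(\Delta_0,\bD_k)$. This is in fact the honest conclusion: $\cU_p$ is $\Qp$-linear on the $\Qp$-vector space $\Hom_{\Gamma_0}(\Delta_0,\bD_k^\dag)$, so no unconditional $\Zp$-integrality can hold, and the paper itself only uses the result in the form $\cU_p(\mu')\in\Hom_{\Gamma_0}(\Delta_0,\bD_k)$ in the proof of the next proposition. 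Rather than asserting that the rescaling ``does not alter the lattice-theoretic content'', you should say explicitly that what you prove is landing in $\bD_k$ (equivalently, in $\bD_k(\Zp)$ after a scalar normalization of $\Phi$); as written, the last paragraph papers over the gap between boundedness and integrality instead of naming it.
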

\begin{proof}
Se déduit du lemme \ref{up}.
\end{proof}
\begin{prop}[Pollack-Stevens \cite{PS}]
\label{rhosurjectif}
L'application $\pi_{0,*}$
$$\pi_{0,*}: \Hom_{\Gamma_0}(\Delta_0, \bD_k^{\dag})
 \to \Hom_{\Gamma_0}(\Delta_0, \Gr^0\bD_k^{\dag})$$
 est surjective.
\end{prop}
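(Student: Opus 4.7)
The strategy is the standard ``section plus coboundary correction''. Since $\pi_0$ is a surjection of $\Qp$-vector spaces, I fix once and for all a $\Qp$-linear section $\sigma\colon \Gr^0 \bD_k^\dag \to \bD_k^\dag$---for instance the ``polynomial'' section that sends $P \in \Qp[w]_{k-2}$ to the unique distribution with Cauchy transform $P$, which lies in $\bD_k^\dag$ since $P \in \cH$. Set $\tilde\Psi_1 = \sigma \circ \Psi_0$: it is $\Qp$-linear, lifts $\Psi_0$ through $\pi_0$, but $\sigma$ is not $\Sigma_0(p)$-equivariant (as one sees immediately by applying $\gamma_1$ to a constant), so $\tilde\Psi_1$ is typically not $\Gamma_0$-equivariant. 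Its defect
\[
c(\gamma)(D) = \tilde\Psi_1(\gamma D) - \gamma\cdot \tilde\Psi_1(D)
\]
takes values in $\Fil^0 \bD_k^\dag$ and is a $1$-cocycle of $\Gamma_0$ with coefficients in $\Hom_\ZZ(\Delta_0, \Fil^0 \bD_k^\dag)$; the proposition is equivalent to showing that $c$ is a coboundary.

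To exhibit the required trivialization I would appeal to Manin's description of $\Delta_0$ as a finitely presented $\ZZ[\Gamma_0]$-module, with finitely many unimodular-edge generators $D_1, \dots, D_r$ and explicit relations coming from the elliptic $2$- and $3$-torsion stabilizers of interior edges and from the parabolic stabilizers $\langle \gamma_1^h\rangle$ at the cusps. An equivariant lift amounts to choosing preimages $\mu_i \in \bD_k^\dag$ of the $\Psi_0(D_i)$ compatible with these finitely many relations. Elliptic relations are handled by averaging the initial $\mu_i$ over the finite cyclic stabilizer (legitimate since we work over $\Qp$) and force no essential constraint. The cusp relations reduce, at each cusp, to solving an equation of the form $\nu\actk(\gamma_1^h - 1) = \mu$ for $\nu \in \Fil^0 \bD_k^\dag$, where $\mu \in \Fil^0 \bD_k^\dag$ is the residual obstruction constructed from $c$.

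The main obstacle is this last solvability inside $\Fil^0 \bD_k^\dag$. The preceding lemma on $\gamma_1$ already produces a unique $\nu \in \cD_k$ solving $\nu\actk(\gamma_1-1) = \mu$ as soon as $\caum\mu$ has vanishing constant term; what remains is to check that $\nu$ actually lies in $\bD_k^\dag \cap \Fil^0$. For the growth I would track $\norm{\caum\nu}_\rho$ through the triangular recursion of that lemma and obtain a bound $\norm{\caum\nu}_\rho \le C_\rho \norm{\caum\mu}_\rho$ valid for every $\rho<1$, hence $\caum\nu \in \cH$. For the filtration, the key identity is
\[
[w^k]\,\caum{\nu\actk(\gamma_1-1)} \;=\; k\cdot [w^{k-1}]\,\caum\nu
\qquad(\nu \in \Fil^0 \bD_k^\dag),
\]
together with the observation that the cusp obstruction $\mu$ coming out of the construction of $c$ from a $\Gamma_0$-equivariant $\Psi_0$ automatically satisfies $\ord_w \caum\mu \ge k$, which is exactly the condition needed for the triangular system to yield $\nu \in w^{k-1}\cH$. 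Once these two estimates are in hand, patching the corrections $\nu$ along the finitely many Manin generators produces the desired $\tilde\Psi \in \Hom_{\Gamma_0}(\Delta_0, \bD_k^\dag)$ with $\pi_{0,*}\tilde\Psi = \Psi_0$.
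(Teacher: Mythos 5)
Your overall architecture (linear section, defect cocycle, Manin-type presentation of $\Delta_0$, averaging over the finite elliptic stabilizers, reduction to a difference equation along $\gamma_1$) is essentially the paper's proof, and your identity $[w^k]\,\caum{\nu\actk(\gamma_1-1)}=k\,[w^{k-1}]\,\caum{\nu}$ for $\nu\in\Fil^0\bD_k^\dag$ is correct: it shows that $\caum{\nu\actk(\gamma_1-1)}=O(w^k)$ for such $\nu$, hence that the difference equation $\nu\actk(\gamma_1-1)=\mu$ is solvable with $\nu\in\Fil^0\bD_k^\dag$ \emph{only if} $\ord_w\caum{\mu}\geq k$. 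The gap is your assertion that the residual obstruction $\mu$ ``automatically'' satisfies this. It does not. What the equivariance of $\Psi$ gives for free is only $\mu=\sum_j\mu_{a_j}\actk(\gamma_{a_j}-1)\in\Fil^0\bD_k^\dag$, i.e.\ $\ord_w\caum{\mu}\geq k-1$; the $w^{k-1}$-coefficient genuinely depends on the chosen lifts $\mu_{a_j}$, since replacing $\mu_{a_{i_0}}$ by $\mu_{a_{i_0}}+t\,\mu^{(k-1)}$ (which does not change the projection to $\Gr^0$) shifts that coefficient by $t\,h_{k-1}$ with $h_{k-1}\neq 0$. Whenever this coefficient is nonzero, the strict inequality $\ord_w\caum{\nu}<\ord_w\caum{\nu\actk(\gamma_1-1)}$ shows the equation has \emph{no} solution in $\Fil^0\bD_k^\dag$, and your patching step fails.

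The paper turns exactly this computation into the missing argument: the relation $\sum_i a_i=0$ involves \emph{all} the generators, not just the parabolic one, so one may first modify the lift at a generator $\gamma_{a_{i_0}}=\smallmat{a_0&b_0\\c_0&d_0}$ with $a_0\neq\pm1$ and $c_0\neq 0$, using that $\caum{\mu^{(k-1)}\actk(\gamma_{i_0}-1)}=h_{k-1}w^{k-1}+O(w^k)$ with $h_{k-1}=a_0^{k-2}-1\neq 0$, to kill the $w^{k-1}$-coefficient of $\mu$ \emph{before} solving the difference equation along $(\gamma_1-1)$. Your reduction to a pure difference equation ``at each cusp'' discards precisely this degree of freedom, so the argument cannot be closed in the form you give it. A secondary point: the bound $\norm{\caum{\nu}}_\rho\leq C_\rho\norm{\caum{\mu}}_\rho$ also deserves a proof rather than a promise, since the diagonal of the triangular system is $\binom{n}{n-1}=n$ and its inversion divides by $n$; the loss $|n|_p^{-1}$ is subexponential in $n$, so $\caum{\nu}$ does remain in $\cH$, but that is the estimate to write down.
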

\begin{proof}\cite[\S 4]{PS}. Donnons ici la démonstration dans le cas où
$\gamma_1=\smallmat{1&1\\0&1}$ appartient à $\Gamma_0$.
Rappelons alors la structure de $\Delta_0$ en tant que $\Gamma_0$-module.
Il existe $a_1=(\infty,0)$, $\cdots$, $a_t=(r_t,s_t)$ dans $\Delta_0$, une involution
$*$ sur l'ensemble $\cV$ des $a_i$
et des éléments
$\gamma_{a_1}=\smallmat{1&1\\0&1}$, $\gamma_{a_2},\cdots,
\gamma_{a_t}$ de $\Gamma_0$ tels que
le $\Gamma_0$-module $\Delta_0$ soit engendré par les $a_1$, \dots, $a_t$
avec les relations
\begin{equation*}
\begin{cases}
&\sum_i a_i=0\\
&a_i + \gamma_{a_i} a_i^*=0 \text{ si $\gamma_{a_i}$ n'est pas d'ordre 3}\\
&a_i + \gamma_{a_i} a_i + \gamma_{a_i}^2 a_i=0 \text{ si $\gamma_{a_i}$ est d'ordre 3}
\end{cases}
\end{equation*}
Soit $\Psi \in \Hom_{\Gamma_0}(\Delta_0, \Gr^0\bD_k^{\dag})$.
Notons $\mu_{a_i}$ un relèvement de $\Psi(a_i)$ dans $\bD_k$
pour un système de représentants de $\cV$ modulo l'involution $*$.
Si $\gamma_{a_i}$ est elliptique d'ordre 3,
$\beta=(1+\gamma_{a_i}+\gamma_{a_i}^2)\mu_{a_i}$ appartient
à $w^{k-1}\bD_k^\dag$ et vérifie $(\gamma_{a_i}-1)\beta=0$.
On déduit de la relation $3=x^2+x+1 -(x+2)(x-1)$ que
$\beta= \frac{1}{3}(1+\gamma_{a_i}+\gamma_{a_i}^2)\beta$.
On peut donc changer $\mu_{a_i}$ de manière à ce que
$(1+\gamma_{a_i}+\gamma_{a_i}^2)\mu_{a_i}=0$. On peut de même
supposer que si $\gamma_{a_i}$ est elliptique d'ordre 2
(on a alors $a_i=a_i^*$), $(1+\gamma_{a_i})\mu_{a_i}=0$.
Passons à la première relation.
Puisque $\Psi \in \Hom_{\Gamma_0}(\Delta_0, \Gr^0\bD_k^{\dag})$, on a
$$ \mu=\sum_{j=1}^t \mu_{a_j}\actk (\gamma_{a_j}-1) \in \Fil^0 \bD_k^{\dag}.$$
Choisissons $i_0$ tel que $\gamma_{a_{i_0}}=\begin{pmatrix}a_{0}&b_{0}\\c_{0}&d_{0}\end{pmatrix}$,
avec $a_0$ différent de $\pm1$ et $c_{0}$ non nul
(il existe car les $\gamma_{a_i}$ engendrent $\Gamma_0$; notons que $i_0\neq 1$).
Nécessairement, $c_0$ est donc de valuation $p$-adique strictement positive.
Si $\mu^{(k-1)}$ est la distribution dont la transformée
de Cauchy est $w^{k-1}$, on a
$$\caum{\mu^{(k-1)} \actk (\gamma_{i_0}-1)}
= \sum_{n \geq 0} h_n w^n,$$
où
\begin{equation*}
h_n = \int (a_0+c_0z)^{k-2-n} (b_0+d_0z)^n \,d\mu^{(k-1)}.
\end{equation*}
Pour $n\leq k-2$, $h_n$ est nul; on a
\begin{equation*}
\begin{split}
h_{k-1}&=-1 + a_0^{-1}\sum_{j=0}^{k-1} (-1)^j (c_0/a_0)^j
\binom{k-1}{j}b_0^{j} d_0^{k-1-j}
\\
&=
-1 + a_0^{k-2} (a_0d_0-b_0c_0)^{k-1}=-1 + a_0^{k-2} \neq 0.
\end{split}
\end{equation*}
Donc
$\caum{\mu} -\caum{\mu_1\actk (\gamma_{i_0}-1)}
=O(w^{k})$
avec $\mu_1=\frac{\mu(z^{k-1})}{h_{k-1}}\caum{\mu^{(k-1)}}$
et il existe $\mu_2\in \Fil^0\bD^\dag_k$ tel que
$\mu = \mu_1\actk (\gamma_{i_0}-1) + \mu_2 \actk (\gamma_1-1)$.
On remplace
$\mu_{a_1}$ par $\mu_{a_1} - \mu_1$ et $\mu_{a_{i_0}}$ par $\mu_{a_{i_0}} - \mu_2$
sans changer leur projection dans $\Hom_{\Gamma_0}(\Delta_0, \Gr^0\bD_k^\dag)$
et on a alors la relation
$$\sum_{j=1}^t \mu_{a_j}\actk (\gamma_{a_j}-1)=0,$$
ce qui permet de définir un élément de
$\Hom_{\Gamma_0}(\Delta_0, \bD_k^\dag)$ dont l'image est $\Psi$.
\end{proof}
\begin{prop}
L'image par $\rho_{k,*}$ de $\Hom_{\Gamma_0}(\Delta_0, \bD_k)$
contient $\Hom_{\Gamma}(\Delta_0, \Qp[\XX]_{k-2})$.
\end{prop}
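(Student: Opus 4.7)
L'idée est d'améliorer la Proposition~\ref{rhosurjectif}, qui ne fournit un relèvement que dans $\bD_k^\dag$, en appliquant $\cU_p$ pour arriver dans $\bD_k$ (via le Lemme~\ref{up} et la proposition précédente).

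Partant d'un $\Psi \in \Hom_\Gamma(\Delta_0, \Qp[\XX]_{k-2}) \subset \Hom_{\Gamma_0}(\Delta_0, \Qp[\XX]_{k-2})$, j'identifierais $\Qp[\XX]_{k-2}$ à $\Gr^0\bD_k^\dag$ via $\rho_k$, puis j'invoquerais la Proposition~\ref{rhosurjectif} pour obtenir un relèvement $\tilde\Phi \in \Hom_{\Gamma_0}(\Delta_0, \bD_k^\dag)$ tel que $\rho_{k,*}(\tilde\Phi) = \Psi$. La proposition précédente entraîne alors que $\cU_p\tilde\Phi$ appartient à $\Hom_{\Gamma_0}(\Delta_0, \bD_k)$: c'est le candidat naturel pour le relèvement cherché.

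Il reste à vérifier que son image par $\rho_{k,*}$ est exactement $\Psi$ et non une variante tordue par Hecke. L'équivariance tordue $\rho_k(\mu\actk\gamma) = \rho_k(\mu)\actk{}^t\gamma^*$ donne $\rho_{k,*}(\cU_p\tilde\Phi) = \widetilde{U}_p\,\Psi$, où $\widetilde{U}_p$ est l'opérateur obtenu par sommation sur $b$ des actions par ${}^t\gamma^*$ pour $\gamma = \betab{b}{p}$. L'invariance par $\Gamma$ (plus forte que celle par $\Gamma_0$) confine $\Psi$ dans un sous-espace de dimension finie stable par $\widetilde{U}_p$; via la forme bilinéaire de la première proposition, $\widetilde{U}_p$ y correspond à l'opérateur de Hecke classique $U_p$ sur les symboles modulaires de niveau $N$ vus au niveau $Np$. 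Choisissant alors $\Psi' \in \Hom_\Gamma(\Delta_0, \Qp[\XX]_{k-2})$ vérifiant $\widetilde{U}_p\Psi' = \Psi$ et appliquant la construction à $\Psi'$, on obtient un élément de $\Hom_{\Gamma_0}(\Delta_0, \bD_k)$ dont l'image par $\rho_{k,*}$ est bien $\Psi$.

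La principale difficulté sera d'établir l'inversibilité de $\widetilde{U}_p$ sur $\Hom_\Gamma(\Delta_0, \Qp[\XX]_{k-2})$; c'est là qu'intervient de manière essentielle l'inclusion stricte $\Gamma_0 \subsetneq \Gamma$, en reliant l'action tordue par la transposée-adjuguée à l'opérateur de Hecke classique au niveau $N$, bien compris via la théorie des formes modulaires.
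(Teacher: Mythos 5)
Votre premi\`ere moiti\'e co\"incide avec la preuve du texte : relever $\Psi$ par la proposition \ref{rhosurjectif} en un \'el\'ement de $\Hom_{\Gamma_0}(\Delta_0,\bD_k^\dag)$, puis appliquer $\cU_p$ pour retomber dans $\bD_k$ (lemme \ref{up}). La seconde moiti\'e, qui est le point essentiel de l'\'enonc\'e, pr\'esente en revanche une lacune r\'eelle, et m\^eme un d\'efaut de cadre. Vous cherchez $\Psi'\in\Hom_{\Gamma}(\Delta_0,\Qp[\XX]_{k-2})$ v\'erifiant $\widetilde U_p\Psi'=\Psi$ ; or $\widetilde U_p$ n'envoie pas $\Hom_\Gamma$ dans $\Hom_\Gamma$ mais seulement dans $\Hom_{\Gamma_0}$, et l'\'equation n'a en g\'en\'eral pas de solution $\Gamma$-invariante : pour $\Psi$ propre pour $T_p(N)=\cU_p+V_p$ avec $V_p=\smallmat{p&0\\0&1}$, l'unique ant\'ec\'edent de $\Psi$ par $\cU_p$ dans le plan engendr\'e par $\Psi$ et $\Psi\actk V_p$ est $p^{-(k-1)}\Psi\actk V_p$, qui n'est pas proportionnel \`a $\Psi$ et n'est donc que $\Gamma_0$-invariant. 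Quant \`a l'inversibilit\'e de $\widetilde U_p$, que vous identifiez vous-m\^eme comme la difficult\'e principale, elle n'est pas \'etablie et exigerait toute la th\'eorie spectrale des symboles modulaires (formes nouvelles, bord, s\'erie d'Eisenstein).

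Le texte contourne tout cela par un calcul explicite d'une ligne : l'identit\'e $\smallmat{p&0\\0&1}\smallmat{1&b\\0&p}=\smallmat{1&b\\0&1}\smallmat{p&0\\0&p}$ et la $\Gamma$-invariance de $\Psi$ donnent $\cU_p\paren{\Psi\actk\smallmat{p&0\\0&1}}=p^{k-1}\Psi$, puisque $\smallmat{p&0\\0&p}$ agit trivialement sur $\Delta_0$ et par $p^{k-2}$ sur $\Qp[\XX]_{k-2}$. Ainsi $\Psi'=p^{-(k-1)}\Psi\actk\smallmat{p&0\\0&1}$ est un ant\'ec\'edent explicite de $\Psi$ par $\cU_p$, et il est $\Gamma_0$-invariant parce que $\smallmat{p&0\\0&1}\Gamma_0\smallmat{p&0\\0&1}^{-1}$ est contenu dans $\Gamma$ (mais pas dans $\Gamma_0$) : c'est pr\'ecis\'ement l\`a que sert l'hypoth\`ese d'invariance par $\Gamma$ tout entier, et non dans une inversion d'op\'erateur de Hecke. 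Pour r\'eparer votre preuve, il suffit de remplacer la recherche d'un inverse de $\widetilde U_p$ par cette formule, en cherchant $\Psi'$ dans $\Hom_{\Gamma_0}$ et non dans $\Hom_\Gamma$ ; le reste de votre argument s'applique alors tel quel.
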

\begin{proof}
Soit $\Phi \in \Hom_{\Gamma}(\Delta_0, \Qp[\XX]_{k-2})$.
Montrons que $\Phi$ appartient à l'image de $\cU_p$. On a
$$\Phi\actk \begin{pmatrix}p&0\\0&1\end{pmatrix}\betab{b}{p} =
\Phi \actk \begin{pmatrix}1&b\\0&1\end{pmatrix}\begin{pmatrix}p&0\\0&p\end{pmatrix}
=\Phi\actk \begin{pmatrix}p&0\\0&p\end{pmatrix}.
$$
L'action de $\smallmat{p&0\\0&p}$ est triviale sur $\Delta_0$ et est la multiplication
par $p^{k-2}$ sur $\Qp[\XX]_{k-2}$.
Donc $$\Phi'=p^{-(k-1)}\Phi \actk \begin{pmatrix}p&0\\0&1\end{pmatrix}$$ appartient à
$\Hom_{\Gamma_0}(\Delta_0, \Qp[\XX]_{k-2})$ et vérifie $\cU_p(\Phi')= \Phi$.
Il existe $\mu'\in \Hom_{\Gamma_0}(\Delta_0, \bD_k^\dag)$
tel que $\rho_{k,*}(\mu')=\Phi'$ par la proposition \ref{rhosurjectif} et
$\rho_{k,*}(\cU_p(\mu'))=\Phi$ avec
$\cU_p(\mu') \in \Hom_{\Gamma_0}(\Delta_0, \bD_k)$.
\end{proof}

Pour $M>M'$,
les projections $ \pi_{M,M'} : \Gr^M \bD_k(\Zp) \to \Gr^{M'} \bD_k(\Zp)$
induisent des opérateurs
$$\pi_{M,M',*}: \Hom_{\Gamma_0}( \Delta_0,\Gr^M \bD_k(\Zp))
\to \Hom_{\Gamma_0}( \Delta_0,\Gr^{M'} \bD_k(\Zp))$$
compatibles avec les actions de $\Sigma_0$ et de $\cU_p$.

\begin{lem}
L'application naturelle
$$\Hom_{\Gamma_0}(\Delta_0, \bD_k(\Zp))\to \limproj{M}\Hom_{\Gamma_0}( \Delta_0,\Gr^M \bD_k(\Zp))$$
est un isomorphisme.
\end{lem}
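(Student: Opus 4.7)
The plan is to reduce the statement to the module-theoretic identification $\bD_k(\Zp) \simeq \limproj{M} \Gr^M \bD_k(\Zp)$, and then invoke the formal principle that $\Hom_{\Gamma_0}(\Delta_0,-)$ commutes with inverse limits. Once the former is established, the map of the lemma factors as
$$\Hom_{\Gamma_0}(\Delta_0, \bD_k(\Zp)) \simeq \Hom_{\Gamma_0}\bigl(\Delta_0, \limproj{M} \Gr^M \bD_k(\Zp)\bigr) \simeq \limproj{M} \Hom_{\Gamma_0}(\Delta_0, \Gr^M \bD_k(\Zp)),$$
the second isomorphism holding because $\Hom(\Delta_0,-)$ and the $\Gamma_0$-fixed-point functor each preserve inverse limits.

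For separatedness of the filtration, any $\mu \in \bigcap_M \Fil^M \bD_k(\Zp)$ lies in $\Fil^0$, so $\mu(z^j) = 0$ for $0 \leq j \leq k-2$; and for each fixed $j \geq k-1$, the divisibility condition on $\mu(z^j)$ imposed by $\Fil^M$ becomes arbitrarily strong as $M \to \infty$ and forces $\mu(z^j) = 0$. Since $\cau$ is injective on $\cD_k$, we conclude $\mu = 0$.

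For completeness, given a compatible system $(\bar\mu_M)_M$ in $\limproj{M} \Gr^M \bD_k(\Zp)$, the description of $\Gr^M \bD_k$ as a finite product of $\Zp$-modules via the moments $\mu(z^j)$ shows that for each $j \geq 0$ the residues $\bar\mu_M(z^j)$ (well-defined modulo a power of $p$ tending to infinity as $M \to \infty$) are compatible, hence assemble into an element $a_j \in \Zp$. The series $g(w) = \sum_{j \geq 0} a_j w^j$ belongs to $\Zp[[w]]$, and the defining equality $\bD_k(\Zp) = \cau^{-1}(\Zp[[w]])$ produces a unique $\mu \in \bD_k(\Zp)$ with $\caum{\mu} = g$. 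By construction $\mu$ reduces to $\bar\mu_M$ in each $\Gr^M \bD_k(\Zp)$, and because each $\Fil^M$ is $\Gamma_0$-stable this bijection is $\Gamma_0$-equivariant.

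The only real step is the completeness argument of the third paragraph; once one views $\bD_k(\Zp)$ as $\Zp[[w]]$ under $\cau$, the coefficient-wise $p$-adic completeness of $\Zp[[w]]$ makes the lifting automatic, and everything else reduces to categorical formalism about limits and $\Hom$.
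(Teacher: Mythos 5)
Le papier �nonce ce lemme sans d�monstration, donc il n'y a pas de preuve de r�f�rence � comparer; votre argument est correct et est celui qu'on attend. Vous r�duisez bien l'�nonc� � l'isomorphisme $\Gamma_0$-�quivariant $\bD_k(\Zp)\simeq \limproj{M}\Gr^M\bD_k(\Zp)$ (s�paration et compl�tude de la filtration, lues sur les moments via $\cau$, en utilisant que toute suite born�e de moments d�finit un �l�ment de $\cD_k$ puisque $\cD_k$ est le dual de l'espace de type $c_0$ qu'est $A_k$), puis au fait formel que $\Hom_{\Gamma_0}(\Delta_0,-)$ commute aux limites projectives.
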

Comme M. Greenberg \cite{mgreenberg}, on étend l'opérateur $\cU_p$
en un endomorphisme
de $\Fonct(\Delta_0, V)$ par
$$\cU_p(\Phi) =\sum_{b=0}^{p-1} \Phi\actk \betab{b}{p}$$
pour $\Phi$ fonction de $\Delta_0$ dans $V$ avec $V=\bD_k(\Qp)$ ou
$V=\Gr^M \bD_k(\Qp)$.
Cette extension dépend des matrices $\betab{b}{p}$
représentant les doubles classes de
$\Gamma_0 \smallmat{1&0\\0&p}\Gamma_0$.
Les propriétés d'intégralité montrées précédemment pour les
$\betab{b}{p}$ (et donc pour cette extension de $\cU_p$) restent vraies.

Soit $D$ un $\Qp$-espace vectoriel muni d'un automorphisme $\varphi$.
On suppose qu'il existe un réseau $L$ de $D$
stable par $\varphi^{-1}$, un entier $h$ et un réel positif $\lambda$ tels que
$$\varphi^{h} L \subset p^{-\lambda} L.$$
On a donc $p^{\lambda} L \subset \varphi^{-h}L$.

\begin{prop}
\label{prop:phioo}
Supposons que $\lambda < h(k-1)$.
Soit $\Phi$ un élément de $\Hom_{\Gamma_0}(\Delta_0,\Qp[\XX]_{k-2}\otimes D)$
tel que $(\cU_p\otimes \varphi)\Phi= \Phi$.
Il existe un élément $\Phi_\infty$ de
$\Hom_{\Gamma_0}(\Delta_0,\bD_k(\Zp)\otimes D)$ qui vérifie
$\rho_{k,*} \Phi_\infty = \Phi$ et
$(\cU_p\otimes \varphi)\Phi_\infty= \Phi_\infty$.
\end{prop}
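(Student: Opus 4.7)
L'id\'ee est de construire $\Phi_\infty$ comme limite $p$-adique des it\'er\'es de $\cU_p\otimes\varphi$ appliqu\'es \`a un rel\`evement initial de $\Phi$. Par la proposition \ref{rhosurjectif} appliqu\'ee composante par composante dans $D$, on dispose d'un rel\`evement $\tilde\Phi_0\in \Hom_{\Gamma_0}(\Delta_0,\bD_k^\dag\otimes D)$ de $\Phi$~; en le rempla\c{c}ant par $(\cU_p\otimes\varphi)\tilde\Phi_0$, qui est encore un rel\`evement de $\Phi$ puisque $\Phi$ est fixe par $\cU_p\otimes\varphi$, le lemme \ref{up} permet de supposer $\tilde\Phi_0$ \`a valeurs dans $p^{-N}\bigl(\bD_k(\Zp)\otimes L\bigr)$ pour un certain entier $N\geq 0$.

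On pose alors $\tilde\Phi_n:=(\cU_p\otimes\varphi)^n\tilde\Phi_0$~; ces it\'er\'es sont tous des rel\`evements de $\Phi$. Les diff\'erences successives s'\'ecrivent $\tilde\Phi_{n+1}-\tilde\Phi_n=(\cU_p\otimes\varphi)^n\Psi$ avec $\Psi:=\tilde\Phi_1-\tilde\Phi_0$, et $\Psi$ appartient au noyau de $\rho_{k,*}$, donc prend ses valeurs dans $p^{-N'}\bigl(\Fil^0\bD_k(\Zp)\otimes L\bigr)$ pour un certain $N'$. L'estimation cl\'e combinera le corollaire \ref{crucial}, qui affirme
$$ \cU_p^n\bigl(\Fil^0\bD_k(\Zp)\bigr)\subset p^{n(k-1)-M}\Fil^M\bD_k(\Zp) $$
pour $M\leq n(k-1)$, avec la majoration $\varphi^{sh}L\subset p^{-s\lambda}L$ d\'eduite de l'hypoth\`ese. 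Pour $n=sh+r$ avec $0\leq r<h$, on obtient
$$ (\cU_p\otimes\varphi)^n\Psi \in p^{\,s(h(k-1)-\lambda)-M-C}\,\Fil^M\bD_k(\Zp)\otimes L $$
o\`u $C$ ne d\'epend que de $N$, $N'$ et des puissances $\varphi^r$ pour $0\leq r<h$. L'hypoth\`ese $\lambda<h(k-1)$ fait tendre cet exposant vers $+\infty$ avec $s$ \`a $M$ fix\'e, donc la suite $(\tilde\Phi_n)$ est de Cauchy dans la topologie d\'efinie par les $\Fil^M\bD_k(\Zp)\otimes L$.

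Le lemme qui pr\'ec\`ede l'\'enonc\'e, \'etendu par tensorisation avec $L$, identifie la limite projective $\limproj{M}\Hom_{\Gamma_0}(\Delta_0,\Gr^M\bD_k(\Zp)\otimes L)$ \`a $\Hom_{\Gamma_0}(\Delta_0,\bD_k(\Zp)\otimes L)$. On en d\'eduit l'existence d'un $\Phi_\infty\in\Hom_{\Gamma_0}(\Delta_0,\bD_k(\Zp)\otimes D)$ avec $\tilde\Phi_n\to\Phi_\infty$~; les propri\'et\'es $\rho_{k,*}\Phi_\infty=\Phi$ et $(\cU_p\otimes\varphi)\Phi_\infty=\Phi_\infty$ en d\'ecoulent par continuit\'e de $\rho_{k,*}$ et de $\cU_p\otimes\varphi$ pour la topologie consid\'er\'ee. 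La difficult\'e principale sera pr\'ecis\'ement cet \'equilibre entre la contraction $p$-adique de $\cU_p^n$ sur $\Fil^0$ et la dilatation potentielle de $\varphi^n$ sur $L$, que l'in\'egalit\'e stricte $\lambda<h(k-1)$ rend effective.
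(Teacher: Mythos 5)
Votre d\'emonstration est correcte et repose sur le m\^eme m\'ecanisme de convergence que celle de l'article : it\'eration \`a partir d'un rel\`evement de $\Phi$, contraction fournie par le corollaire \ref{crucial} contre la dilatation $\varphi^{h}L\subset p^{-\lambda}L$, et conclusion par compl\'etude via la limite projective des $\Gr^M$. La diff\'erence r\'eelle tient au traitement de l'\'equivariance. L'article rel\`eve $\Phi$ par le rel\`evement \emph{na\"{\i}f} \`a valeurs dans $\Zp[w]_{k-2}\otimes L$, qui n'est qu'une fonction de $\Delta_0$ (d'o\`u le travail dans $\Fonct(\Delta_0,\cdot)$ et l'it\'eration sur les classes $\overline{\Phi}_n$ modulo filtration), et ne r\'ecup\`ere l'invariance par $\Gamma_0$ qu'\`a la limite, par un argument d'ind\'ependance du rel\`evement choisi ; ce choix est d\'elib\'er\'e car il rend la construction directement algorithmique. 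Vous utilisez au contraire la proposition \ref{rhosurjectif} pour obtenir d'embl\'ee un rel\`evement $\Gamma_0$-\'equivariant dans $\bD_k^\dag\otimes D$, corrig\'e par une application de $\cU_p\otimes\varphi$ (lemme \ref{up}) pour retomber dans $p^{-N}\bigl(\bD_k(\Zp)\otimes L\bigr)$ : tous les it\'er\'es restent alors dans $\Hom_{\Gamma_0}$ et la v\'erification d'\'equivariance dispara\^{\i}t. Ce que vous gagnez en propret\'e, vous le payez en effectivit\'e (la surjectivit\'e de $\pi_{0,*}$ demande de r\'esoudre les relations de pr\'esentation de $\Delta_0$) et en d\'ependance vis-\`a-vis d'un \'enonc\'e plus fort que n\'ecessaire. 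Deux points m\'eriteraient d'\^etre explicit\'es, sans que cela constitue une lacune : (i) la commutation $\rho_{k,*}\circ(\cU_p\otimes\varphi)=(\cU_p\otimes\varphi)\circ\rho_{k,*}$, qui justifie que $\tilde\Phi_1$ rel\`eve encore $\Phi$ et que $\Psi$ est \`a valeurs dans $\ker\rho_k\otimes D$ (elle d\'ecoule du lemme \ref{noyau}) ; (ii) le fait que les sommes partielles $\tilde\Phi_n=\tilde\Phi_0+\sum_{m<n}(\cU_p\otimes\varphi)^m\Psi$ restent dans un r\'eseau fixe $p^{-N''}\bigl(\bD_k(\Zp)\otimes L\bigr)$ --- ce qui s'obtient en prenant $M=0$ dans votre estimation, puisque $n(k-1)-\lfloor n/h\rfloor\lambda\geq 0$ --- sans quoi l'invocation de la limite projective des $\Gr^M$ ne suffirait pas \`a produire la limite dans $\Hom_{\Gamma_0}(\Delta_0,\bD_k(\Zp)\otimes D)$.
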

Commençons la démonstration.
Quitte à multiplier $\Phi$ par une constante, on peut relever
$\Phi$ par~$\rho_{k,*}$
en un élément $\Phi_0$ de $\Hom(\Delta_0,\bD_k(\Zp)\otimes L)$
tel que $\caum{\Phi_{0}(\delta)}$ appartienne à
$p^{\lambda}\Zp[w]_{k-2}\otimes L$ pour tout $\delta\in\Delta_0$, et tel que
$$(\cU_p\otimes \varphi)\Phi_0 \in \Fonct(\Delta_0,\bD_k(\Zp)\otimes L).$$
L'image $\overline{\Phi}_0$ de $(\cU_p\otimes \varphi)\Phi_0$ dans
$\Fonct(\Delta_0,\Gr^0 \bD_k(\Zp)\otimes L)$ est un relèvement de $\Phi$
par $\rho_{k,*}$.
On relève $\overline{\Phi}_0$ en un élément $\widehat{\Phi}_0$
de $\Fonct(\Delta_0,\ZZ[w]_{k-2}\otimes L)$.
Il vérifie la congruence
\begin{equation*}
 \cU_p (\widehat{\Phi}_0) \equiv (1\otimes \varphi^{-1})\widehat{\Phi}_0\bmod
\Fonct(\Delta_0,\Fil^0\bD_k(\Zp)\otimes L).
\end{equation*}
\begin{lem} Supposons trouvé $\overline{\Phi}_{n-1}$ dans
$$\Fonct(\Delta_0,\bD_k(\Zp) \,/ \, p^{h(k-2)(n-1)}\Fil^{h(n-1)}\bD_k(\Zp)\otimes L)$$
tel que
\begin{equation*}
\cU_p(\overline{\Phi}_{n-1})\equiv
 (1\otimes \varphi^{-1}) \overline{\Phi}_{n-1}
  \bmod p^{h(k-2)(n-1)}\Fonct(\Delta_0,\Fil^{h(n-1)}\bD_k(\Zp)\otimes L).
\end{equation*}
Si $\widehat{\Phi}_{n-1}$ est un relèvement dans $\Fonct(\Delta_0,
\bD_k(\Zp)\otimes L)$ de $\overline{\Phi}_{n-1}$, alors
$\overline{\Phi}_{n} = \cU_p^h(\widehat{\Phi}_{n-1})$ vérifie
\begin{equation}
\begin{cases}
  \overline{\Phi}_{n} &\equiv (1\otimes \varphi)^{-h} \overline{\Phi}_{n-1}
 \bmod p^{h(k-2)(n-1)}\Fonct(\Delta_0,\Fil^{h(n-1)}\bD_k(\Zp)\otimes L)
\\
 \cU_p( \overline{\Phi}_{n}) &\equiv
 (1\otimes \varphi)^{-1}\overline{\Phi}_{n} \bmod p^{h(k-2)n}\Fonct(\Delta_0,\Fil^{hn}\bD_k(\Zp)\otimes L)
.
\end{cases}
\end{equation}
Il ne dépend pas du relèvement choisi modulo
$$p^{h(k-2)n}\Fonct(\Delta_0,\Fil^{hn}\bD_k(\Zp)\otimes L)
\subset
\Fonct(\Delta_0,\Fil^{h(k-1)n}\bD_k(\Zp)\otimes L)
$$
et son image dans $\Fonct(\Delta_0,\Gr^{h(k-1)n}\bD_k(\Zp))$
appartient à $\Hom_{\Gamma_0}(\Delta_0,\Gr^{h(k-1)n}\bD_k(\Zp))$.
\end{lem}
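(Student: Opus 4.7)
The plan is to introduce the error
$E := \cU_p(\widehat{\Phi}_{n-1}) - (1\otimes\varphi^{-1})\widehat{\Phi}_{n-1}$,
which by hypothesis takes values in $p^{h(k-2)(n-1)}\Fil^{h(n-1)}\bD_k(\Zp)\otimes L$. Since $\cU_p$ acts only on the $\bD_k(\Zp)$-factor, it commutes with $1\otimes\varphi$, and a straightforward telescoping yields, for every $m\geq 1$,
$$
\cU_p^m(\widehat{\Phi}_{n-1}) = (1\otimes\varphi^{-m})\widehat{\Phi}_{n-1} + \sum_{j=0}^{m-1}(1\otimes\varphi^{-j})\cU_p^{m-1-j}(E).
$$
All three congruences in the statement then reduce to applications of Corollary~\ref{crucial}, using that $\varphi^{-1}$ (and hence every $\varphi^{-j}$ for $j\geq 0$) preserves the lattice $L$.

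For the first congruence, specialize to $m=h$. The summand $j=h-1$ equals $E$ itself, which already lies in $p^{h(k-2)(n-1)}\Fil^{h(n-1)}\bD_k(\Zp)\otimes L$. For $j<h-1$, Corollary~\ref{crucial} with $s=h-1-j\geq 1$ and $t=0$ gains an extra factor $p^{(h-1-j)(k-1)}\geq 1$ while keeping the filtration level at $h(n-1)$. Hence $\overline{\Phi}_n - (1\otimes\varphi^{-h})\widehat{\Phi}_{n-1}$ takes values in $p^{h(k-2)(n-1)}\Fil^{h(n-1)}\bD_k(\Zp)\otimes L$, and reducing $\widehat{\Phi}_{n-1}$ modulo this submodule recovers $\overline{\Phi}_{n-1}$, yielding the first congruence.

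For the second congruence I compute
$$
\cU_p(\overline{\Phi}_n) - (1\otimes\varphi^{-1})\overline{\Phi}_n
= \cU_p^{h+1}(\widehat{\Phi}_{n-1}) - (1\otimes\varphi^{-1})\cU_p^h(\widehat{\Phi}_{n-1}) = \cU_p^h(E),
$$
and Corollary~\ref{crucial} with $s=h$, $t=h$ (allowed because $h\leq h(k-1)$ for $k\geq 2$) places $\cU_p^h(E)$ in $p^{h(k-2)n}\Fil^{hn}\bD_k(\Zp)\otimes L$, as required. Independence from the chosen lift is obtained by the same estimate applied to $\eta=\widehat{\Phi}_{n-1}-\widehat{\Phi}'_{n-1}$, whose values lie in $p^{h(k-2)(n-1)}\Fil^{h(n-1)}\otimes L$; Corollary~\ref{crucial} then gives $\cU_p^h(\eta)\in p^{h(k-2)n}\Fil^{hn}\otimes L$. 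The auxiliary inclusion $p^{h(k-2)n}\Fil^{hn}\subset\Fil^{h(k-1)n}$ is immediate from~\eqref{subset}.

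The main obstacle is the $\Gamma_0$-equivariance of the image of $\overline{\Phi}_n$ in $\Gr^{h(k-1)n}\bD_k(\Zp)$. I expect this to be supplied by a parallel induction: assuming $\widehat{\Phi}_{n-1}$ is $\Gamma_0$-equivariant modulo $\Fil^{h(k-1)(n-1)}\bD_k(\Zp)\otimes L$ (the base case $n=1$ being furnished by the construction of $\widehat{\Phi}_0$ from the equivariant $\Phi$), the defect $\widehat{\Phi}_{n-1}(g\delta)-g\cdot\widehat{\Phi}_{n-1}(\delta)$ lies in $\Fil^{h(k-1)(n-1)}\otimes L$ for each $g\in\Gamma_0$. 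Applying $\cU_p^h$ and invoking Corollary~\ref{crucial} with the maximal value $t=h(k-1)$ pushes this defect into $\Fil^{h(k-1)n}\otimes L$. Combined with the standard identity $g\cU_p^h=\cU_p^h g$ obtained by permuting the double-coset representatives $\betab{b}{p^h}$ under left multiplication by $g\in\Gamma_0$, this shows that $\overline{\Phi}_n$ is $\Gamma_0$-equivariant modulo $\Fil^{h(k-1)n}\bD_k(\Zp)\otimes L$, so that its image in $\Gr^{h(k-1)n}\bD_k(\Zp)$ indeed belongs to $\Hom_{\Gamma_0}(\Delta_0,\Gr^{h(k-1)n}\bD_k(\Zp))$.
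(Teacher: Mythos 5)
Your proof is correct and follows essentially the same route as the paper: both congruences and the independence of the lift are obtained by applying Corollary~\ref{crucial} to the error term $\cU_p(\widehat{\Phi}_{n-1})-(1\otimes\varphi^{-1})\widehat{\Phi}_{n-1}$ (your telescoping identity is just a more explicit form of the paper's computation), and the $\Gamma_0$-equivariance follows from the same uniqueness-plus-filtration-gain mechanism that the paper compresses into \og par unicit\'e \fg. The only substantive difference is that you make explicit the auxiliary inductive hypothesis that $\widehat{\Phi}_{n-1}$ is $\Gamma_0$-equivariant modulo $\Fil^{h(k-1)(n-1)}\bD_k(\Zp)\otimes L$; this hypothesis is indeed needed (it is not literally contained in the lemma's stated assumptions) and is supplied by the surrounding construction, so spelling it out is a welcome clarification rather than a deviation.
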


\begin{proof}
La première congruence vient de
l'hypothèse sur $\overline{\Phi}_{n-1}$
et de l'inclusion $\varphi^{-1} L \subset L$:
\begin{equation*}
\begin{split}
\overline{\Phi}_n = \cU_p^h( \widehat{\Phi}_{n-1})
&\equiv (1\otimes \varphi)^{-h}\widehat{\Phi}_{n-1} \bmod
p^{h(k-2)(n-1)}\Fonct(\Delta_0,\Fil^{h(n-1)}\bD_k(\Zp)\otimes L)
\\&
\equiv (1\otimes \varphi)^{-h}\overline{\Phi}_{n-1}\bmod p^{h(k-2)(n-1)}
\Fonct(\Delta_0,\Fil^{h(n-1)}\bD_k(\Zp)\otimes L).
\end{split}
\end{equation*}
\begin{equation*}
\end{equation*}
En utilisant que
$$\cU_p^h\Fonct(\Delta_0,\Fil^{h(n-1)}\bD_k(\Zp)\otimes L)\subset p^{h(k-2)}
\Fonct(\Delta_0,\Fil^{hn}\bD_k(\Zp)\otimes L),$$ on a
\begin{equation*}
\begin{split}
\cU_p(\overline{\Phi}_{n})&=
\cU_p\circ \cU_p^h (\widehat{\Phi}_{n-1})=
\cU_p^h\circ \cU_p (\widehat{\Phi}_{n-1})\\
&
\equiv (\cU_p^h\otimes \varphi^{-1})(\widehat{\Phi}_{n-1}) \bmod p^{h(k-2)n}
\Fonct(\Delta_0,\Fil^{hn}\bD_k(\Zp)\otimes L)
\\
&\equiv (1\otimes \varphi)^{-1} \overline{\Phi}_{n}
\bmod p^{h(k-2)n}\Fonct(\Delta_0,\Fil^{hn}\bD_k(\Zp)\otimes L).
\end{split}
\end{equation*}
Pour deux relèvements $F$ et $F'$ de $\overline{\Phi}_{n-1}$, on a
$$(F -F')(\delta) \in \Fil^{h(k-1)n} \bD_k(\Zp)\otimes L$$
pour tout $\delta \in \Delta_0$
et donc
\begin{equation*}
\begin{split}
\cU_p^h(F-F')\in \Fonct(\Delta_0, &p^{h(k-1)} \Fil^{h(k-1)(n-1)}\bD_k(\Zp))
\\ &\subset \Fonct(\Delta_0, \Fil^{h(k-1)n}\bD_k(\Zp))
\end{split}
\end{equation*}
par le corollaire \ref{crucial}. Par unicité, on montre
que $\overline{\Phi}_n$ est un homomorphisme
de $\Delta_0$ à valeurs dans $\Gr^{h(k-1)n}\bD_k(\Zp))$,
invariant par $\Gamma_0$, puis que
$$\cU_p(\overline{\Phi}_n) = (1\otimes \varphi)^{-1}\overline{\Phi}_n \in
\Hom_{\Gamma_0}(\Delta_0,\Gr^{h(k-1)n}\bD_k(\Zp)).$$
\end{proof}
\begin{lem}On reprend les notations des lemmes précédents.
Supposons que $\lambda < h(k-1)$.
La suite $\Phi_{n}=(1\otimes \varphi)^{hn} \overline{\Phi}_{n}$ converge dans
$\Hom(\Delta_0,\bD_k^\dag\otimes D)$. Sa limite
$\Phi_\infty$ est indépendante des choix faits, appartient à
$\Hom_{\Gamma_0}(\Delta_0,\bD_k^\dag\otimes D)$ et
vérifie $(\cU_p\otimes \varphi)(\Phi_\infty) = \Phi_\infty$.
\end{lem}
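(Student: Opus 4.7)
Ma strat�gie est de montrer d'abord que la suite $(\Phi_n)$ est de Cauchy dans $\Hom(\Delta_0,\bD_k^\dag\otimes D)$ pour la topologie donn�e par la filtration $\Fil^{\bullet}$, puis de transf�rer � la limite la $\Gamma_0$-�quivariance, la relation de point fixe, et l'ind�pendance des choix interm�diaires. Concr�tement, je fixerais un r�seau $L$ de $D$ stable par $\varphi^{-1}$ et v�rifiant $\varphi^h L\subset p^{-\lambda}L$, et je travaillerais dans $\bD_k(\Zp)\otimes L$ quitte � multiplier $\Phi$ par une puissance convenable de $p$, comme cela a d�j� �t� fait au d�but de la construction de $\Phi_0$.

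L'estim�e cl� concerne la diff�rence $\Phi_n-\Phi_{n-1}=(1\otimes\varphi)^{hn}\overline{\Phi}_n-(1\otimes\varphi)^{h(n-1)}\overline{\Phi}_{n-1}$. En appliquant $(1\otimes\varphi)^{hn}$ � la premi�re congruence du lemme pr�c�dent, on obtient
$$\Phi_n - \Phi_{n-1} \in p^{h(k-2)(n-1)}\Fonct\paren{\Delta_0,\Fil^{h(n-1)}\bD_k(\Zp)\otimes \varphi^{hn}L}.$$
L'hypoth�se $\varphi^h L\subset p^{-\lambda}L$ donne par it�ration $\varphi^{hn}L\subset p^{-n\lambda}L$, et l'inclusion \eqref{subset} permet d'absorber les puissances de $p$ dans la filtration: la diff�rence appartient � $\Fonct(\Delta_0,\Fil^{c_n}\bD_k(\Zp)\otimes L)$ avec $c_n=n(h(k-1)-\lambda)-h(k-1)$. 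L'hypoth�se $\lambda<h(k-1)$ assure pr�cis�ment que $c_n\to+\infty$, ce qui donne la convergence dans $\Hom(\Delta_0,\bD_k^\dag\otimes D)$.

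Il restera alors � v�rifier les trois propri�t�s de la limite. L'ind�pendance par rapport aux choix suit du fait que deux rel�vements distincts donnent des suites dont la diff�rence appartient � des filtrations de plus en plus profondes, par l'unicit� modulo filtration �tablie dans le lemme pr�c�dent. La $\Gamma_0$-�quivariance passe � la limite puisque, toujours par le lemme pr�c�dent, chaque $\overline{\Phi}_n$ est d�j� $\Gamma_0$-�quivariant modulo $\Fil^{h(k-1)n}\bD_k(\Zp)\otimes L$. Enfin, pour la relation de point fixe, j'�crirais
$$(\cU_p\otimes\varphi)\Phi_n = (1\otimes\varphi)^{hn+1}\cU_p(\overline{\Phi}_n),$$
et j'utiliserais la seconde congruence du lemme pr�c�dent, $\cU_p(\overline{\Phi}_n)\equiv(1\otimes\varphi)^{-1}\overline{\Phi}_n$, pour conclure que $(\cU_p\otimes\varphi)\Phi_n - \Phi_n$ tend vers z�ro par le m�me argument d'estimation que ci-dessus.

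L'obstacle principal sera l'arithm�tique des bilans $p$-adiques: il faut que le gain de filtration fourni par une it�ration de $\cU_p^h$ (combinant un gain $h(k-2)$ en puissance de $p$ et un gain $h$ en profondeur de filtration, soit un gain total $h(k-1)$ via l'inclusion \eqref{subset}) compense strictement la perte $p^\lambda$ induite par l'action de $\varphi^{-h}$. L'hypoth�se $\lambda<h(k-1)$ est pr�cis�ment la condition d'effectivit� de ce bilan, et toute la d�monstration tourne autour de cette comptabilit�.
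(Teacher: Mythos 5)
Votre preuve est correcte et suit essentiellement la m�me d�marche que celle de l'article : on applique $(1\otimes\varphi)^{hn}$ � la premi�re congruence du lemme pr�c�dent, on utilise $\varphi^{hn}L\subset p^{-n\lambda}L$ et l'inclusion \eqref{subset} pour obtenir une profondeur de filtration $n(h(k-1)-\lambda)-h(k-1)$ (qui co�ncide, � d�calage d'indice pr�s, avec le $t_n$ de l'article), d'o� la convergence sous l'hypoth�se $\lambda<h(k-1)$. Votre v�rification des propri�t�s de la limite (�quivariance, point fixe, ind�pendance des choix) explicite simplement ce que l'article se contente d'affirmer.
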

\begin{proof}
On déduit du lemme précédent et de ce que $\varphi^{-h} L \subset p^{-\lambda}L$
que
\begin{equation*}
\begin{split}
(1\otimes \varphi)^{h(n+1)} \overline{\Phi}_{n+1} \equiv
(1\otimes \varphi)^{hn} \overline{\Phi}_{n}
  \bmod p^{h(k-2)n-\lambda (n+1)} \Hom(\Delta_0,\Fil^{hn}\bD_k(\Zp)\otimes L).
\end{split}
\end{equation*}
Donc,
\begin{equation*}
\begin{split}
(1\otimes \varphi)^{h(n+1)} \overline{\Phi}_{n+1} \equiv
(1\otimes \varphi)^{hn} \overline{\Phi}_{n}
  \bmod \Hom(\Delta_0,\Fil^{t_n}\bD_k(\Zp)\otimes L)
\end{split}
\end{equation*}
avec $t_n=h(k-1)n-\lambda (n+1)=(h(k-1) -\lambda)n -\lambda$.
Pour $\lambda < h(k-1)$, la suite $\Phi_n$ converge donc
dans $\Hom(\Delta_0,\bD_k(\Zp)\otimes L)$,
sa limite est invariante par $\Gamma_0$ et vérifie
$$(\cU_p\otimes\varphi)(\Phi_\infty) = \Phi_\infty.$$
\end{proof}
La proposition \ref{prop:phioo} se déduit des lemmes précédents. Remarquons
que plus $\lambda$ est petit, plus la convergence est rapide.
\begin{prop}
Soient $\Phi_\infty$ un élément de
$\Hom_{\Gamma_0}(\Delta_0, \bD_k(\Zp)\otimes D)$ vérifiant
$(\cU_p\otimes \varphi)\Phi_\infty=\Phi_\infty$,
$\Phi$ son image dans
$\Hom_{\Gamma_0}(\Delta_0, \Qp[\XX]_{k-2}\otimes D)$ par $\rho_{k,*}$ et
$\mu_{\Phi}$ la restriction à $\Zp^\times$ de $\tilde{\mu}_{\Phi}=\Phi_\infty((\infty,0))$.
Alors, pour $n\geq 1$ et $a\in\Zp^\times$, on a
\begin{equation*}
\begin{split}
\displaystyle
\int_{a+p^n\Zp} f \,d\mu_{\Phi}
&= \varphi^n
   \bildist{\Phi_\infty\actk\smallmat{1&a\\0&p^n}((\infty,0))}{f},\\
\displaystyle
\int z^j \,d\mu_{\Phi}
&= (1- p^j\varphi)\bildist{\Phi_\infty((\infty,0))}{z^j}.
\end{split}
\end{equation*}
\end{prop}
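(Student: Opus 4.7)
The strategy is to iterate the equivariance $(\cU_p \otimes \varphi)\Phi_\infty = \Phi_\infty$. A direct matrix computation shows that, as $(b_0, \ldots, b_{n-1})$ ranges over $\{0, \ldots, p-1\}^n$, the products $\betab{b_0}{p} \betab{b_1}{p} \cdots \betab{b_{n-1}}{p}$ run bijectively through the matrices $\smallmat{1 & a \\ 0 & p^n}$ for $0 \leq a < p^n$. Hence $\cU_p^n(\Psi) = \sum_{a=0}^{p^n-1} \Psi \actk \smallmat{1 & a \\ 0 & p^n}$, and evaluating the iterated equivariance at the divisor $(\infty, 0)$ yields, in $\bD_k(\Zp) \otimes D$,
\begin{equation*}
\tilde{\mu}_{\Phi} = \varphi^n \sum_{a=0}^{p^n-1} \bigl(\Phi_\infty \actk \smallmat{1 & a \\ 0 & p^n}\bigr)\bigl((\infty,0)\bigr).
\end{equation*}

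For the first identity, the action of $\beta = \smallmat{1 & a \\ 0 & p^n}$ on a distribution $\nu \in \bD_k$ reads $(\nu \actk \beta)(g) = \int g(a + p^n z)\, d\nu(z)$, since the factor $(1+0\cdot z)^{k-2}$ is trivial. Test the identity above against $g = f \cdot \charun_{a+p^n\Zp}$: in the term indexed by $a'$, the integrand contains the factor $\charun_{a+p^n\Zp}(a'+p^n z)$, which vanishes unless $a' \equiv a \pmod{p^n}$, so only the $a' = a$ term survives. Since $a + p^n\Zp \subset \Zp^\times$, the left-hand side coincides with $\int_{a+p^n\Zp} f\, d\mu_\Phi$, giving the first formula.

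For the second identity, decompose $\Zp^\times = \bigsqcup_{a=1}^{p-1}(a+p\Zp)$ and apply the first identity with $n = 1$ and $f(z) = z^j$:
\begin{equation*}
\int z^j\, d\mu_{\Phi} = \varphi \sum_{a=1}^{p-1} (\Phi_\infty \actk \betab{a}{p})\bigl((\infty,0)\bigr)(z^j).
\end{equation*}
Meanwhile the $n=1$ equivariance reads $\tilde{\mu}_{\Phi}(z^j) = \varphi\sum_{a=0}^{p-1}(\Phi_\infty \actk \betab{a}{p})((\infty,0))(z^j)$, and the missing $a=0$ term is easy to handle: $\betab{0}{p}$ fixes both $\infty$ and $0$ in $\PP^1(\QQ)$, so $(\Phi_\infty \actk \betab{0}{p})((\infty,0)) = \tilde{\mu}_{\Phi}\actk\betab{0}{p}$, whose value at $z^j$ equals $p^j\tilde{\mu}_{\Phi}(z^j)$. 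Subtracting yields the second formula. The only subtlety throughout is maintaining the right-action convention $(\Psi \actk \gamma)(D) = \Psi(\gamma D)\actk \gamma$ consistently when expressing $\cU_p^n$ as a sum of matrix actions; everything else reduces to direct computation from the explicit form of $\actk$ on distributions.
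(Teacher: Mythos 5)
Your proof is correct and follows essentially the same route as the paper's: iterate $(\cU_p\otimes\varphi)\Phi_\infty=\Phi_\infty$, test against $f\cdot\charun_{a+p^n\Zp}$ so that only the $b\equiv a\bmod p^n$ term of $\cU_p^n$ survives, then isolate the $a=0$ contribution (using that $\smallmat{1&0\\0&p}$ fixes $(\infty,0)$ and sends $z^j$ to $p^jz^j$) to get the factor $1-p^j\varphi$. The only cosmetic difference is that for the second identity you write $\int_{\Zp^\times}=\sum_{a=1}^{p-1}\int_{a+p\Zp}$ where the paper writes $\int_{\Zp^\times}=\int_{\Zp}-\int_{p\Zp}$; these are the same computation.
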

\begin{proof}
Pour $a\in\Zp$, notons $F = \charun_{a+p^n\Zp} \cdot f$.
L'identité
$(\cU_p\otimes \varphi)(\Phi_\infty)=\Phi_\infty$
et le comportement de l'action de $\Gl_2(\QQ)$ impliquent que pour $\delta\in \Delta_0$,
\begin{equation*}
\begin{split}
\bildist{(\cU_p\otimes \varphi)^n(\Phi_\infty)(\delta)}{F}
&=
\varphi^n\sum_{b=0}^{p^n-1}
\bildist{\Phi_\infty\actk \smallmat{1&b\\0&p^n}(\delta)}{F}\\
&=
\varphi^n\sum_{b=0}^{p^n-1}
\bildist{\Phi_\infty(\smallmat{1&b\\0&p^n}\delta)\actk\smallmat{1&b\\0&p^n}}{F}
\\
&=
\varphi^n
\bildist{\sum_{b=0}^{p^n-1}\Phi_\infty(\smallmat{1&b\\0&p^n}\delta)}
  {\smallmat{1&b\\0&p^n}\cdot_k F}.
\end{split}
\end{equation*}
Or,
\begin{equation*}
\paren{\smallmat{1&b\\0&p^n} \cdot_k F}(z) =
F(b+p^n z)
=\begin{cases}
f(a+p^n z)\cdot \charun_{\Zp}(z) &\text{si $a\equiv b \bmod p^n$}
\\
0 &\text{ si $a\not \equiv b \bmod p^n$.}
\end{cases}
\end{equation*}
Donc,
\begin{equation*}
\begin{split}
\bildist{(\cU_p\otimes \varphi)^n(\Phi_\infty)(\delta)}{F}
=&
\varphi^n\left(\bildist{\Phi_\infty(\smallmat{1&a\\0&p^n}\delta)}{f(a+p^n z)}\right)
\\
&=
\varphi^n\left(\bildist{\Phi_\infty \actk \smallmat{1&a\\0&p^n}(\delta)}{f}\right).
\end{split}
\end{equation*}
Comme
$$\int_{a+p^n\Zp} f \,d\widetilde{\mu}_{\Phi}
=\bildist{(\cU_p\otimes \varphi)^n(\Phi_\infty)((\infty,0))}{F},$$
on a pour $a$ premier à $p$ et $n \geq 1$,
  $$\int_{a+p^n\Zp} f \,d\mu_{\Phi}=\int_{a+p^n\Zp} f \,d\widetilde{\mu}_{\Phi}
  =
  \varphi^n\left(\bildist{\Phi_\infty\actk \smallmat{1&a\\0&p^n}((\infty,0))}{f}\right).
  $$
et pour $a=0$ et $n=1$,
\begin{equation*}
\int_{p\Zp} f \,d\widetilde{\mu}_{\Phi}=
\varphi\left(\bildist{\Phi_\infty \actk \smallmat{1&0\\0&p}((\infty,0))}{f}\right),
\end{equation*}
d'où
\begin{equation*}
\int f \,d\mu_{\Phi}=
\bildist
  {\paren{\Phi_\infty- (1\otimes\varphi)\Phi_\infty \actk \smallmat{1&0\\0&p}}
    ((\infty,0))}{f}.
\end{equation*}
On en déduit la proposition en utilisant le fait que
  $\smallmat{1&0\\0&p}(\infty,0)=(\infty,0)$.
\end{proof}
\begin{rem}
Pour $0 \leq j \leq k-2$, $\bildist{\Phi_\infty(\delta)}{z^j}$ est égal à
$\bildist{\Phi(\delta)}{z^j}$ pour $\delta\in \Delta_0$.
\end{rem}
\section{Applications aux formes paraboliques}
\label{application}
Prenons $\Gamma=\Gamma_0(N)$. Soit $\Psi$ un élément de
$\Hom_\Gamma (\Delta_0,\Qp[\XX]_{k-2})$.
Sous certaines conditions, nous allons le relever en
un symbole à valeurs dans $\bD_k(\Zp)$.

\subsection{Cas ordinaire}
\label{ordinaire}On suppose que
$N$ est premier à $p$ et que $\Psi$ est
vecteur propre pour l'opérateur $T_p(N)=\cU_p + V_p$ avec $V_p=\smallmat{p&0\\0&1}$:
$$T_p(N)\Psi=a_p \Psi.$$
On a
\begin{equation*}
\begin{split}
&\cU_p(\Psi)=T_p(N)\Psi - \Psi \actk \begin{pmatrix}p&0\\0&1\end{pmatrix}
=a_p\Psi - \Psi \actk \begin{pmatrix}p&0\\0&1\end{pmatrix}
\\
&\cU_p\paren{\Psi \actk \begin{pmatrix}p&0\\0&1\end{pmatrix}}=
p \Psi \actk \begin{pmatrix}p&0\\0&p\end{pmatrix}= p^{k-1} \Psi,
 \end{split}
\end{equation*}
soit $(\cU_p^2 - a_p \cU_p + p^{k-1})\Psi = 0$.
Supposons que
$X^2-a_pX+p^{k-1}$ a une racine dans $\Zp$, i.e. $\ord_p(a_p) < (k-1)/2$,
par exemple
lorsque $p$ ne divise pas $a_p$. On note $\alpha$ une racine de valuation
$\lambda$ minimale; le produit des deux racines étant $p^{k-1}$, on a
$$ \lambda \leq \frac{k-1}{2} < k-1.$$
Alors,
$$\Phi=
\Psi -\frac{1}{\alpha}\Psi\actk \begin{pmatrix}p&0\\0&1\end{pmatrix}$$
appartient à $\Hom_{\Gamma_0} (\Delta_0,\Qp[\XX]_{k-2})$ et vérifie
$ \cU_p(\Phi)= \alpha \Phi$.
On peut lui appliquer la proposition \ref{prop:phioo} avec $h=1$,
$D=\Qp$ muni de la multiplication $\varphi$ par $\alpha^{-1}$, $L = \Zp$
et $\lambda = \ord_p(\alpha)$:
il existe un élément $\Phi_\infty\in \Hom_{\Gamma_0}(\Delta_0,\bD_k(\Zp))$
vérifiant
\begin{equation}
\label{eq:ordinaire}
\begin{cases}
\rho_{k,*}(\Phi_\infty) &= \Phi = \Psi -\frac{1}{\alpha}\Psi\actk \smallmat{p&0\\0&1}\\
\cU_p(\Phi_\infty) &=\alpha\Phi_\infty.
\end{cases}
\end{equation}

\subsection{Cas supersingulier}
\label{superss}
Plus généralement, en supposant toujours que $N$ est premier à $p$ et que
$\Psi$ est vecteur propre pour $T_p(N)$ de valeur propre $a_p$,
soit $D=\Qp e_1 \oplus \Qp e_2$ le $\Qp$-espace vectoriel de dimension 2 muni
de l'endomorphisme $\varphi$ donné dans la base $(e_1,e_2)$ par la matrice
de $M_2(\QQ)$ suivante:
$$ \begin{pmatrix}
0 & p^{1-k}\\
-1& p^{1-k}a_p
\end{pmatrix}.$$
Son inverse $\varphi^{-1}$ est de matrice $\smallmat{a_p&-1\\p^{k-1}&0}$
dans $M_2(\ZZ)$ et
on considère le réseau $L=\Zp e_1 \oplus \Zp e_2$, stable par $\varphi^{-1}$.
L'endomorphisme $\varphi$ vérifie $p^{k-1}\varphi^2 - a_p \varphi + \id=0$.
Alors,
$$\Phi=\Psi e_1 - \Psi\actk \smallmat{p&0\\0&1}\varphi e_1
=\Psi e_1 + \Psi\actk \smallmat{p&0\\0&1} e_2$$
appartient à
$\Hom_{\Gamma_0}(\Delta_0,\Qp[\XX]_{k-2}\otimes L)$
et vérifie
$(\cU_p\otimes \varphi)(\Phi) = \Phi$.
La matrice de $\varphi^{2}$ est
$$p^{2-2k}\smallmat{-p^{k-1}&a_p\\-p^{k-1}a_p&a_p^2-p^{k-1}},$$
ce qui montre que
$\varphi^{2}L \subset p^{-\lambda} L$ avec
$$\lambda=2k-2-\min(k-1, \ord_p(a_p)) =
\begin{cases}
k-1 & \text{si $\ord_p(a_p) \geq k-1$},\\
2k-2 - \ord_p(a_p) & \text{sinon}.\\
\end{cases}
$$
En particulier, $\lambda = k-1$ si $a_p = 0$.
On applique la proposition \ref{prop:phioo} avec $h=2$ et $\lambda$ ainsi
défini:
il existe donc un élément
$\Phi_\infty\in \Hom_{\Gamma_0}(\Delta_0,\bD_k(\Zp)\otimes D)$ tel que
\begin{equation}
\label{eq:superss}
\begin{cases}
\rho_{k,*}(\Phi_\infty) &= \Psi e_1 - \Psi\actk \smallmat{p&0\\0&1}\varphi^{-1} e_1\\
(\cU_p\otimes \varphi)(\Phi_\infty) &= \Phi_\infty.
\end{cases}
\end{equation}

\subsection{Cas semistable}
\label{semistable}On suppose maintenant que
$N$ est exactement divisible par $p$, $\Psi$ étant toujours vecteur propre
pour l'opérateur $T_p(N)=\cU_p$. Dans ce cas, la valeur propre
$a_p$ est égale à $\pm 1$. On peut alors appliquer la proposition
\ref{prop:phioo}
à $\Phi=\Psi$, $h=1$ et $D=\Qp$ muni de la multiplication par $a_p$:
il existe un élément $\Phi_\infty\in \Hom_{\Gamma_0}(\Delta_0,\bD_k(\Zp))$
vérifiant
\begin{equation}
\label{eq:semistable}
\begin{cases}
\rho_{k,*}(\Phi_\infty) &= \Psi\\
\cU_p(\Phi_\infty) &=a_p\Phi_\infty.
\end{cases}
\end{equation}

\subsection{\texorpdfstring{Fonctions $L$ $p$-adiques}{Lg}}
Soit $F$ une forme parabolique pour $\Gamma_0(N)$ et $\Per(F)$ le symbole
\footnote{Dans \cite{periodes}, il est à valeurs dans l'espace vectoriel
$\QQ[x,y]_{k-2}$ isomorphe à $\QQ[\XX]_{k-2}$ par $P\mapsto P(\XX,1)$
de réciproque $P \mapsto y^{k-2}P(x/y)$.
}
à valeurs dans $\CC[\XX]_{k-2}$ qui lui est associé:
on a donc pour tout rationnel $r$
$$\Per(F)((\infty,r))=\int_{i\infty}^r F(t) (\XX t+1)^{k-2}dt
=
\sum_{j=0}^{k-2}\binom{k-2}{j} \paren{\int_{i\infty}^r F(t) t^jdt} \XX^j.
$$
Il existe un $\QQ$-sous-espace vectoriel
$\Omega_F$ de $\CC$ de dimension 2 et un réseau $\cL_F$ de $\Omega_F$
tel que $\Per(F)(\Delta_0)$ soit contenu dans
$\cL_F \otimes \ZZ[\XX]_k$. Nous avons pris le parti
de ne pas couper selon les parties $+$ et $-$, ce qui signifie de travailler
peut-être de manière osée dans un $\QQ$-espace vectoriel de dimension finie
qui n'est pas une droite. Nous laissons le lecteur faire les projections nécessaires
par le choix de bases.

Plaçons-nous dans une des situations du paragraphe \ref{application}
pour $\Per(F)$ et notons
$\Perp(F)$ le symbole associé
$$\Perp(F)\in \Hom_{\Gamma_0}(\Delta_0, \cL_F\otimes \Qp[\XX]_{k-2}\otimes D ).$$
Soit $\Perpi(F)\in \Hom_{\Gamma_0}(\Delta_0,\cL_F\otimes \bD_k(\Zp)\otimes D)$
le symbole tel que
\begin{equation*}
\begin{split}
\rho_{k,*}(\Perpi(F))&=\Perp(F)
\\
(\cU_p\otimes \varphi)(\Perpi(F))&=\Perpi(F).
\end{split}
\end{equation*}
Soit $\tilde{\mu}_{F,p}$ la distribution associée à valeurs dans $\cL_F\otimes D$:
$\tilde{\mu}_{F,p}=\Perpi(F)((\infty,0))$
et $\mu_{F,p}$ sa restriction à $\Zp$.
On a
\begin{equation*}
\int_{a+p^n\Zp} f \,d\tilde{\mu}_{F,p}
=\varphi^{n}\bildist{\Perpi(F)\actk \betab{a}{p^n}((\infty,0))}{f}.
\end{equation*}
Prenons $f(z)=(\XX z +1)^{k-2}$.
On a en utilisant le fait que
$\Per(F\actk \gamma)=\Per(F)\actk \gamma$,
\begin{equation*}
\begin{split}
\bildist{\Per(F)\actk \betab{a}{p^n}((\infty,0))}{f}
&=
\bildist{\Per(F\actk \betab{a}{p^n})((\infty,0))}{f}
\\
\bildist{\Per(F)\actk \smallmat{p&0\\0&1}\betab{a}{p^n}((\infty,0))}{f}
&=p^{k-2}
\bildist{\Per(F\actk \betab{a}{p^{n-1}})((\infty,0))}{f}.
\end{split}
\end{equation*}
Si $g$ est une fonction périodique de $\ZZ$ dans $\CC$, on pose
$L(F,s,g)=\sum_{n=1}^\infty g(n) a_n n^s$
où $\sum_{n\geq 1} a_n q^n$ est le $q$-développement
de $F$. Pour $g=1$, on obtient la fonction $L$ complexe $L(F,s)$ de $F$.
On note $L_{\{p\}}$ la fonction $L$ incomplète en $p$:
$$L_{\{p\}}(F,s)=(1-a_p p^{-s} + p^{k-1-2s})L(F,s).$$
On a
\begin{equation*}
\begin{split}
\bildist{\Per(F\actk \betab{a}{p^{n}})((\infty,0))}{(\XX z +1)^{k-2}}
&=p^{-n}\int_{i\infty}^0 F(\frac{z+a}{p^n}) (\XX z +1)^{k-2} dz
\\&=
\int_{i\infty}^0 F(z+\frac{a}{p^n}) (p^n\XX z +1)^{k-2} dz
\\
&= \sum_{j=0}^{k-2}\binom{k-2}{j}p^{nj}
\frac{\Gamma(j+1)}{(-2i\pi)^{j+1}}L(F,j+1,\epsilona{a}{p^n})X^j
\end{split}
\end{equation*}
où $\epsilona{a}{p^n}(x)=\exp(\frac{2i\pi a x}{p^n})$
est périodique sur $\ZZ$ de période divisant $p^n$.
Lorsque $a$ est premier à $p$ et que $n$ est supérieur ou égal à $1$, on a
\begin{equation*}
\begin{split}
\int_{a+p^n\Zp} z^j \,d\mu_{F,p}&=
\frac{\Gamma(j+1)}{(-2i\pi)^{j+1}}p^{nj} \paren{
  L(F,j+1,\epsilona{a}{p^{n}})\varphi^{n}e_1
- p^{k-2-j}L(F,j+1,\epsilona{a}{p^{n-1}})\varphi^{n+1}e_1
 }\\
&=
\frac{\Gamma(j+1)}{(-2i\pi)^{j+1}}L\paren{F,j+1, H}
\end{split}
\end{equation*}
avec une notation un peu osée
\begin{equation*}
H=\paren{\epsilona{a}{p^{n}} - p^{k-2-j}\epsilona{a}{p^{n-1}}\varphi}
   p^{nj}\varphi^{n}e_1.
\end{equation*}
On en déduit que si $\chi$ est un caractère de conducteur $p^n$ avec $n>0$, on a
\begin{equation*}
(p^{-j-1}\varphi)^{-n}\int_{\Zp} \chi(z) z^j \,d\mu_{F,p}=
\frac{\Gamma(j+1)}{(-2i\pi)^{j+1}}\frac{L_{\{p\}}(F,j+1,\overline{\chi})}{G(\overline{\chi})}e_1.
\end{equation*}
Pour $n=0$, on a de même
\begin{equation*}
\int_{\Zp} z^j \,d\widetilde{\mu}_{F,p}=
\frac{\Gamma(j+1)}{(-2i\pi)^{j+1}}
  L(F,j+1)(1- p^{k-2-j} \varphi)e_1,
\end{equation*}
d'où,
\begin{equation*}
\int_{\Zp} z^j d\mu_{F,p}=
\frac{\Gamma(j+1)}{(-2i\pi)^{j+1}}L(F,j+1)(1-p^j \varphi)(1- p^{k-2-j} \varphi)e_1
\end{equation*}
La relation
$\varphi^{-2}-a_p\varphi^{-1} + p^{k-1}\id=0$ implique que pour $0\leq j\leq k-2$,
$$(1-p^{k-2-j} \varphi)(1-p^{-j-1} \varphi^{-1})=
1-a_p p^{-(j+1)} + p^{k-1-2(j+1)}.$$
On en déduit que
\begin{equation*}
(1-p^{j}\varphi)^{-1}(1-p^{-j-1}\varphi^{-1})
\int_{\Zp} z^j d\mu_{F,p}=
\frac{\Gamma(j+1)}{(-2i\pi)^{j+1}}L_{\{p\}}(F,j+1)e_1.
\end{equation*}
Dans le cas ordinaire, $\varphi$ est simplement la multiplication
par $\alpha^{-1}$. D'où (en prenant $e_1=1$)
\begin{equation*}
(1-p^{j}\alpha^{-1})^{-1}(1-p^{-j-1}\alpha)
\int_{\Zp} z^j d\mu_{F,p}=
\frac{\Gamma(j+1)}{(-2i\pi)^{j+1}}L_{\{p\}}(F,j+1)
\end{equation*}
et pour $\chi$ un caractère de conducteur $p^n$ avec $n>0$,
\begin{equation*}
(p^{-j-1}\alpha)^{-n}
\int_{\Zp} \chi(z) z^j d\mu_{F,p}=
\frac{\Gamma(j+1)}{(-2i\pi)^{j+1}}\frac{L_{\{p\}}(F,j+1,\overline{\chi})}{G(\overline{\chi})}
.
\end{equation*}
Il y a plusieurs manières de définir la fonction $L$ $p$-adique
associée à une distribution. Si on la voit comme une fonction sur
les caractères $p$-adiques de $\Zp^*$,
la fonction $L$ $p$-adique associée à $F$ est alors (à des normalisations près)
$$L_p(\chi)=\int_{\Zp^*} \chi(z) d\mu_{F,p}(z).$$

On retrouve ainsi les formules usuelles reliant les fonctions $L$ $p$-adiques
aux fonctions $L$ complexes
(dans le cas supersingulier, voir par exemple \cite{experimenta})
une fois choisies les bases usuelles de $\Omega_F$ quand $F$ est vecteur propre
de tous les opérateurs de Hecke.

\bibliographystyle{smfplain}
\selectbiblanguage{french}
\bibliography{biblio}
\end{document}